\theoremstyle{definition}
\def\fnum{equation} 
\newtheorem{Thm}[\fnum]{Theorem}
\newtheorem{Cor}[\fnum]{Corollary}
\newtheorem{Lem}[\fnum]{Lemma}
\newtheorem{Exa}[\fnum]{Example}
\newtheorem{Pro}[\fnum]{Proposition}
\numberwithin{equation}{section}
\newcommand{\Vol}{{\text{Vol}}}
\newcommand{\nn}{{\bf{n}}}
\newcommand{\Ric}{{\text{Ric}}}
\newcommand{\Tr}{{\text{Tr}}}
\newcommand{\II}{{\text{II}}}
\newcommand{\Hess}{{\text {Hess}}}
\def\RR{{\bold R}}
\def\SS{{\bold S}}
\newcommand{\dv}{{\text {div}}}
\newcommand{\eqr}[1]{(\ref{#1})}
\title[Harmonic monotonicity]{Ricci curvature and monotonicity for harmonic functions}
\author{Tobias Holck Colding}%
\address{MIT, Dept. of Math.\\
77 Massachusetts Avenue, Cambridge, MA 02139-4307.}
\author{William P. Minicozzi II}%
\address{Johns Hopkins University\\
Dept. of Math.\\
3400 N. Charles St.\\
Baltimore, MD 21218.}
\thanks{The authors
were partially supported by NSF Grants DMS  11040934, DMS
1206827,  and NSF FRG grants DMS 
 0854774 and DMS 0853501}
\email{colding@math.mit.edu and minicozz@math.mit.edu}
\begin{document}

\maketitle

\begin{abstract}
  In this paper we generalize the monotonicity formulas of  \cite{C}  for manifolds with nonnegative Ricci curvature.   Monotone quantities play a key role in   analysis and geometry; see, e.g., 
  \cite{A}, \cite{CM1} and \cite{GL}
  for applications of monotonicity to uniqueness.
  Among the applications here is that level sets of Green's function on open manifolds with nonnegative Ricci curvature are asymptotically umbilic. 
  \end{abstract}

\section{Introduction}

The paper \cite{C} proved three new monotonicity formulas for harmonic functions on manifolds with nonnegative Ricci curvature.  We will see that each of these formulas sits in a two-parameter family of monotonicity formulas, with one parameter corresponding to the equation and the second to the power of the gradient in the formula.  Furthermore, various limiting cases of these formulas carry important geometric information.  This paper deals with only one of the parameters, the power of the gradient; the other will be done in \cite{CM2}.

    \subsection{Monotonicity formulas}  
      
      Our monotonicity formulas will involve a one-parameter family $A_{\beta}$ of scale-invariant 
      quantities on an open manifold $M^n$.  Namely, for   a proper positive function $u:M\setminus \{x\}\to \RR$, define 
\begin{align}
A_{\beta}(r)=r^{1-n} \int_{u=r} |\nabla u|^{1+\beta }\, . 
\end{align}
To simplify notation, given a dimension $n > 2$ and an exponent $\beta \geq \frac{n-2}{n-1} $, then we define
\begin{align}	\label{e:tildeb}
	\tilde{\beta} = \tilde{\beta}(n,\beta) \equiv 1 +(\beta -1) (n-1)  \geq 0 \, .
\end{align}

   The next theorem shows  that $A_{\beta}$ is monotone when $M$ has nonnegative Ricci curvature.
  The derivative is an integral depending on the trace-free second fundamental form $\Pi_0$ 
  of the level sets and the trace-free Hessian $B$ of the function $u^2$ where $u^{2-n}$ is a
   Green's function.\footnote{Our Green's functions will be normalized so that on Euclidean space of dimension $n\geq 3$ the Green's function of the Laplacian is $r^{2-n}$.}

  \begin{Thm}	\label{t:A2mono}
Let $M$ be nonparabolic{\footnote{A complete manifold is   nonparabolic if it admits a positive Green's function $G$. By a result of Varopoulos, \cite{V},   an open manifold with nonnegative Ricci curvature is nonparabolic if and only if 
$
\int_1^{\infty} \frac{r}{\Vol (B_r(x))}\,dr<\infty$. Combining the result of Varopoulos with work of  Li-Yau, \cite{LY}, gives that   $G=G(x,\cdot)\to 0$ at infinity.}}
 with nonnegative Ricci curvature and Green's function $G$.  If
  $ u=G^{\frac{1}{2-n}}$, then
\begin{align}
  A_{\beta} '(r) 
&= -\beta \, r^{n-3} \, \int_{ r \leq u  } u^{4-2n} \,    |\nabla u|^{\beta}
	 \left(         \left| \II_0 \right|^2  
	 +   \Ric (\nn , \nn) 
	  \right) \notag  \\
	  &\quad - \frac{\beta \, r^{n-3}}{4(n-1)} \, \int_{ r \leq u  } u^{2-2n}   \,  |\nabla u|^{\beta -2}   \left( \tilde{\beta} \,
	\left| B(\nn) \right|^2 + (n-2) \, \left| B(\nn)^T \right|^2 
	\right)  \, .
 \end{align}

\end{Thm}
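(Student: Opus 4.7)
The plan is to reduce $A_\beta'(r)$ to a bulk integral over $\{u\geq r\}$ via coarea and Green's identity with the weight $G^2=u^{4-2n}$, and then use Bochner together with the equation $\Delta u=(n-1)|\nabla u|^2/u$ (which follows from $\Delta G=0$ and the chain rule) to reorganize the integrand into the stated form.

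The coarea formula, combined with the identity $\beta|\nabla u|^{\beta-1}\partial_\nn|\nabla u|=\partial_\nn|\nabla u|^\beta$, gives $A_\beta'(r)=r^{1-n}\int_{u=r}\partial_\nn|\nabla u|^\beta\,d\sigma$ by a direct calculation. Applying Green's identity to the pair $(G^2,|\nabla u|^\beta)$ on $\{r\leq u\leq R\}$, using $G\equiv r^{2-n}$ on $\{u=r\}$ and the fact that the boundary contribution at infinity vanishes (by the nonparabolic $\Ric\geq 0$ asymptotics of \cite{LY}, \cite{V}), together with the identity $u^{3-2n}\nabla u=-\nabla G^2/[2(n-2)]$ applied to the term $\int\nabla G^2\cdot\nabla|\nabla u|^\beta\,dV$, produces the reduction
\begin{align*}
-r^{3-n}A_\beta'(r)=\int_{u\geq r}u^{4-2n}\Delta|\nabla u|^\beta\,dV-2\beta(n-2)\int_{u\geq r}u^{3-2n}|\nabla u|^\beta\Hess u(\nn,\nn)\,dV.
\end{align*}

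The main obstacle is the pointwise algebraic identity showing that the integrand on the right equals the stated RHS of the theorem. Expand $\Delta|\nabla u|^\beta=\tfrac{\beta}{2}|\nabla u|^{\beta-2}\Delta|\nabla u|^2+\tfrac{\beta(\beta-2)}{4}|\nabla u|^{\beta-4}|\nabla|\nabla u|^2|^2$ and apply Bochner to $|\nabla u|^2$, using $\Delta u=(n-1)|\nabla u|^2/u$ to eliminate $\nabla\Delta u$. The key ingredients are: (a) on $T\Sigma$, $\Hess u|_{T\Sigma}=|\nabla u|\,\II$, so $|\Hess u|_{T\Sigma}|^2=|\nabla u|^2(|\II_0|^2+H^2/(n-1))$; (b) the equation yields $\Hess u(\nn,\nn)=(n-1)|\nabla u|^2/u-|\nabla u|H$; (c) from $\Hess(u^2)=2du\otimes du+2u\Hess u$ one gets $B(\nn)=2u\,\Hess(u)\cdot\nn$, so $|B(\nn)|^2=4u^2\Hess u(\nn,\nn)^2+|B(\nn)^T|^2$; (d) $|\nabla|\nabla u|^2|^2=4|\nabla u|^2|\Hess u\cdot\nn|^2$, which after applying (c) becomes $u^{4-2n}|\nabla u|^{\beta-4}|\nabla|\nabla u|^2|^2=u^{2-2n}|\nabla u|^{\beta-2}|B(\nn)|^2$. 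Substituting (a)--(d) and collecting: the pure $|\nabla u|^{\beta+2}/u^2$-type pieces (from expanding $H^2$ via (b) and from the $-(n-1)|\nabla u|^4/u^2$ contribution to $\langle\nabla u,\nabla\Delta u\rangle$) cancel against each other; the linear-in-$H$ cross-terms combine with the $(n-1)\langle\nabla u,\nabla|\nabla u|^2\rangle/u$ term from Bochner to produce exactly the $2\beta(n-2)u^{3-2n}|\nabla u|^\beta\Hess u(\nn,\nn)$ contribution, matching the second integral in the reduction; and the remaining quadratic-in-Hessian pieces assemble into the coefficient $\tilde\beta=(n-1)\beta-(n-2)=1+(\beta-1)(n-1)$ in front of $|B(\nn)|^2$ and the coefficient $(n-2)$ in front of $|B(\nn)^T|^2$.
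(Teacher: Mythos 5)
Your reduction identity is correct: it is just the divergence theorem applied to $u^{4-2n}\,\nabla |\nabla u|^{\beta}$ on the region $\{r\leq u\leq R\}$, and your pointwise ingredients (a)--(d) are exactly the trace-free Hessian identities the paper develops (Lemma \ref{l:k1}, Corollary \ref{c:k1}, Lemma \ref{l:B0}, Proposition \ref{p:ptog}, Lemma \ref{l:divforms}), which assemble into the same pointwise formula the paper uses in Theorem \ref{t:A2mono3}; so the algebraic part of your plan is sound, if only sketched. The genuine gap is the outer boundary term. That term is the flux $\int_{u=R} u^{4-2n}\,\langle \nabla |\nabla u|^{\beta},\nn\rangle = R^{3-n}A_{\beta}'(R)$, and the asymptotics from \cite{V} and \cite{LY} (that $G\to 0$ at infinity) together with the gradient bound $|\nabla u|\leq 1$ control $A_{\beta}(R)$ but give no control at all on $A_{\beta}'(R)$: the flux involves second derivatives of $G$ on large level sets, for which no pointwise decay is available. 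A priori $R^{3-n}A_{\beta}'(R)$ could tend to a nonzero limit or oscillate, so asserting that it vanishes ``by the asymptotics'' assumes essentially what the theorem is designed to prove.

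The paper's proof exists precisely to fill this hole, and it never estimates the flux directly. It first records the two-boundary identity on the compact region $\{r_1\leq u\leq r_2\}$ (Theorem \ref{t:A2mono3}); since $\Ric\geq 0$ makes the bulk term nonnegative, $f(r)\equiv r^{3-n}A_{\beta}'(r)$ is nondecreasing. If $f(r)>0$ for some $r$, then $A_{\beta}'(s)\geq s^{n-3}f(r)$ for $s>r$, and integrating forces $A_{\beta}(R)\to\infty$, contradicting the uniform bound $A_{\beta}\leq \Vol(\partial B_1(0))$ of Lemma \ref{l:bounded}; hence $f\leq 0$, i.e. $A_{\beta}'\leq 0$. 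Monotonicity and boundedness of $A_{\beta}$ then give a sequence $r_j\to\infty$ with $f(r_j)\to 0$, and taking $r_2=r_j$ in Theorem \ref{t:A2mono3} together with monotone convergence yields the stated formula, and simultaneously the finiteness of the improper integrals over $\{u\geq r\}$, which your write-up also takes for granted. If you want to keep the Green's-identity phrasing, you must replace the appeal to \cite{LY}, \cite{V} by this sign-plus-boundedness argument (or an equivalent), noting that the vanishing of the boundary term is only obtained along a sequence of radii, which suffices.
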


We used $\nn = \frac{\nabla u}{|\nabla u|}$ to denote the unit normal to the level sets of $u$.

\vskip2mm
The other monotonicity formulas show that
various combinations of different $A_{\beta}$'s are monotone.  These will be stated in Section \ref{s:mono}.
 
\subsection{Asymptotically umbilic}

     We say that the level sets of a proper function $u$ are {\emph{asymptotically umbilic}} if 
\begin{align}
r\, \int_r^{2r} \, \frac{1}{\Vol (u=s)}\int_{u=s}\left|  \II_0 \right|^2 \, ds \to 0 \text{ as } r\to \infty\, .
\end{align}
 
A consequence of our monotonicity formulas is that the level sets of $u$ are asymptotically umbilic if $M$ has Euclidean volume growth; cf. \cite{ChCT}.

\begin{Cor}	\label{c:umb}
If $M$ has nonnegative Ricci curvature and Euclidean volume growth, i.e., $\lim_{r\to \infty} \, r^{-n} \, \Vol (B_r) > 0$, then the level sets of $ u=G^{\frac{1}{2-n}}$ are
 asymptotically umbilic.
\end{Cor}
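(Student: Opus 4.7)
The plan is to invoke Theorem~\ref{t:A2mono} with the admissible and convenient exponent $\beta = 1$ (which satisfies $1 \geq \frac{n-2}{n-1}$) and to exploit the monotonicity of $A_1$ in an integrated form. Since $A_1 \geq 0$ and $A_1$ is nonincreasing, the limit $A_1(\infty) := \lim_{r\to\infty} A_1(r)$ exists in $[0,\infty)$, so $A_1(R) - A_1(2R) \to 0$ as $R \to \infty$. Dropping the nonnegative $\Ric(\nn,\nn)$ and $B$ contributions from $-A_1'$ and applying the coarea formula (the factor $|\nabla u|$ in the interior integrand cancels the $|\nabla u|^{-1}$ from coarea), one obtains
\begin{equation*}
-A_1'(r) \;\geq\; r^{n-3} \int_r^\infty t^{4-2n} \Bigl( \int_{u=t} |\II_0|^2 \, d\sigma_t \Bigr) dt.
\end{equation*}

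I would then integrate this inequality in $r \in [R, 2R]$ and swap the order via Fubini. Restricting the inner $t$-integral to the sub-window $[3R/2, 2R]$ (on which $\int_R^t r^{n-3}\,dr \gtrsim R^{n-2}$ and $t^{4-2n} \gtrsim R^{4-2n}$) produces an estimate of the form
\begin{equation*}
R^{2-n} \int_{3R/2}^{2R} \int_{u=t} |\II_0|^2 \, d\sigma_t \, dt \;\leq\; C\bigl( A_1(R) - A_1(2R)\bigr) \longrightarrow 0.
\end{equation*}
The same argument applied at finitely many rescaled starting radii $\alpha R$, with $\alpha$ chosen from a finite set in $[\tfrac12,1]$, gives sub-windows $[3\alpha R/2, 2\alpha R]$ whose union covers $[R, 2R]$; adding these finitely many estimates extends the integrated bound from the sub-window to the full dyadic interval.

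The remaining ingredient is the lower bound $\Vol(u=s) \geq c\, s^{n-1}$ for large $s$, which converts the integrated $|\II_0|^2$ estimate into the averaged form required by the definition. Applying the divergence theorem to the harmonic function $G$ shows that the flux $\int_{u=r} |\nabla G|\,d\sigma$ is independent of $r$, which in the paper's normalization reads $A_0(r) = r^{1-n}\int_{u=r}|\nabla u| \equiv \omega_{n-1}$; Cauchy--Schwarz on the level set then yields $\Vol(u=s) \geq \omega_{n-1}^2\, s^{n-1}/A_1(s)$. Euclidean volume growth provides the matching upper bound on $A_1$ at infinity (in fact gives $A_1(\infty) > 0$ by comparison with the Euclidean model), so $\Vol(u=s)/s^{n-1}$ stays bounded below. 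Combining the three steps yields
\begin{equation*}
r \int_r^{2r} \frac{1}{\Vol(u=s)} \int_{u=s} |\II_0|^2 \, d\sigma_s \, ds \;\longrightarrow\; 0,
\end{equation*}
which is asymptotic umbilicity. The main technical obstacle is matching the weights produced by the coarea/Fubini manipulation with the normalization in the definition; the ingredient most characteristic of the Euclidean volume growth hypothesis is precisely the lower bound on $\Vol(u=s)$.
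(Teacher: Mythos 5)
Your proposal is correct and follows essentially the same route as the paper: monotonicity of $A_1$ from Theorem \ref{t:A2mono} with $\beta=1$, the vanishing of the dyadic differences of $A_1$ (since $A_1\geq 0$ is non-increasing and hence has a limit), the coarea formula, and the level-set volume lower bound $\Vol(u=s)\geq c\, s^{n-1}$ as in Lemma \ref{l:bounded}. The differences are only cosmetic: the paper integrates $-A_1'$ over $[R/2,R]$ and restricts the interior integral directly to $\{R\leq u\leq 2R\}$, which avoids your Fubini-plus-finite-covering step, and it gets the volume bound from $|\nabla u|\leq 1$ rather than Cauchy--Schwarz; note also that the upper bound on $A_1$ you invoke does not come from Euclidean volume growth (it follows from Lemma \ref{l:bounded} or from monotonicity), the volume growth hypothesis being used only to guarantee nonparabolicity and hence the existence of $G$.
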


\section{The trace-free Hessian}

In this section, we will compute the Laplacian on combinations of $u$ and $|\nabla u|$ that come up later in the monotonicity formulas.  In order to squeeze as much as possible out of these formulas, we will express them in terms of the trace-free Hessian $B$ and trace-free second fundamental form $\Pi_0$ which vanish in the model case.

\vskip2mm
We will several times later use the following elementary lemma:

\begin{Lem}	\label{l:}
Let  $u:M\to \RR$ be a smooth positive function.  
The following are equivalent:
\begin{itemize}
\item $\Delta\, u^2=2n\,|\nabla u|^2$.
\item $\Delta\, u=(n-1)\,\frac{|\nabla u|^2}{u}$.
\item $\Delta\, u^{2-n}=0$.
\end{itemize}
\end{Lem}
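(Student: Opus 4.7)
The plan is to reduce all three conditions to the single identity $u\, \Delta u = (n-1)\, |\nabla u|^2$ by applying the standard chain rule for the Laplacian of a composition: for a smooth function $f:\RR\to\RR$ and a smooth $u:M\to \RR$, one has $\Delta f(u) = f'(u)\, \Delta u + f''(u)\, |\nabla u|^2$. This identity requires only the product and chain rules for $\nabla$ and $\dv$, and no curvature assumption, so the whole argument is a routine calculation on any Riemannian manifold.

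First I would specialize to $f(t) = t^2$ to get $\Delta u^2 = 2u\,\Delta u + 2|\nabla u|^2$. Subtracting $2|\nabla u|^2$ from both sides and comparing with $2n|\nabla u|^2$, the first bullet is equivalent to $2u\,\Delta u = 2(n-1)|\nabla u|^2$, which (since $u>0$) is exactly the second bullet.

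Next I would specialize to $f(t) = t^{2-n}$, giving $\Delta u^{2-n} = (2-n)\, u^{1-n}\, \Delta u + (2-n)(1-n)\, u^{-n}\, |\nabla u|^2$. Since $n>2$ and $u>0$, I may divide by the nonzero factor $(2-n)\, u^{1-n}$ to conclude that $\Delta u^{2-n} = 0$ is equivalent to $\Delta u = (n-1)\, \frac{|\nabla u|^2}{u}$, which is again the second bullet. This closes the loop.

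There is no real obstacle here; the only mild care needed is to ensure the factors $(2-n)$ and powers of $u$ by which we divide are nonzero, which follows from $n>2$ and the positivity of $u$. Since all three conditions are equivalent to the common identity $u\,\Delta u = (n-1)\, |\nabla u|^2$, the lemma follows immediately.
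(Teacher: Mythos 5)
Your proposal is correct and follows essentially the same route as the paper: the chain-rule identity $\Delta f(u)=f'(u)\,\Delta u+f''(u)\,|\nabla u|^2$ specialized to powers is exactly the paper's formula $\Delta v^{\alpha}=\alpha\, v^{\alpha-1}\left((\alpha-1)\,\frac{|\nabla v|^2}{v}+\Delta v\right)$, and both arguments reduce all three bullets to $u\,\Delta u=(n-1)\,|\nabla u|^2$. Your explicit remark that one needs $n>2$ (and $u>0$) to divide by $(2-n)\,u^{1-n}$ is a sensible point the paper leaves implicit.
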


\begin{proof}
For any positive function $v$ and any real number $\alpha$ we have the following
\begin{equation}
	\Delta v^{\alpha}=\alpha\, v^{\alpha-1}\,\left( (\alpha-1)\, \frac{|\nabla v|^2}{v}+\Delta\, v\right)\, .
\end{equation}
The lemma easily follows.
\end{proof}

\subsection{The trace free Hessian of $u^2$}
Throughout this section,  the function $u$ satisfies 
\begin{equation}
	\Delta u^2 = 2n \, |\nabla u|^2 \, . 
\end{equation}
 The basic example  is the function $|x|$ on Euclidean space $\RR^n$.  In that case, the full Hessian of $|x|$ satisfies a nice equation, not just the Laplacian of $|x|$.  There are a number of ways of writing this, perhaps the simplest being that the trace-free Hessian of $|x|^2$ vanishes.  
 
 With this in mind, 
we 
define the tensor $B$ to be the trace-free    Hessian of $u^2$ 
\begin{align}
	B =   \Hess_{u^2}   - 2\, |\nabla u|^2 \, g  \, ,
\end{align}
where $g$ is the Riemannian metric.

We will use that $ \Hess_{u^2} = 2 \, u \, \Hess_u + 2 \, \nabla u \otimes \nabla u$, so that
\begin{align}	\label{e:uhessu}
	2 \, u \, \Hess_u =  \Hess_{u^2} - 2 \, \nabla u \otimes \nabla u = B + 2\, \left(   |\nabla u|^2 \, g  
	-   \nabla u \otimes \nabla u  \right) \, .
\end{align}

The next lemma computes the gradient of $|\nabla u|^2$ in terms of $B$.

\begin{Lem}	\label{l:k1}
We have   $u \, \nabla |\nabla u|^2 = B(\nabla u)$, where $B(\nabla u)$ is   given by
$\langle B(\nabla u) , v \rangle \equiv B(\nabla u , v)$. 
\end{Lem}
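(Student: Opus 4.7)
The plan is to reduce the claim to the identity \eqref{e:uhessu} by expressing $\nabla |\nabla u|^2$ in terms of $\Hess_u$.

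First I would recall the standard identity
\begin{equation*}
\nabla |\nabla u|^2 = 2 \, \Hess_u(\nabla u) \, ,
\end{equation*}
which follows directly from differentiating $|\nabla u|^2 = \langle \nabla u, \nabla u \rangle$ and using that the Hessian is symmetric. Multiplying by $u$ gives
\begin{equation*}
u \, \nabla |\nabla u|^2 = 2 \, u \, \Hess_u(\nabla u) \, .
\end{equation*}

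Next I would plug in \eqref{e:uhessu}, which expresses $2u\,\Hess_u$ as $B + 2(|\nabla u|^2 g - \nabla u \otimes \nabla u)$, and apply both sides to $\nabla u$. The term $2|\nabla u|^2 g(\nabla u) = 2|\nabla u|^2 \nabla u$ exactly cancels $2(\nabla u \otimes \nabla u)(\nabla u) = 2\langle \nabla u, \nabla u \rangle \nabla u$, leaving $B(\nabla u)$. This is the claim.

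There is no real obstacle here; the lemma is essentially an algebraic rearrangement of the definition of $B$ combined with one line of the Bochner-type identity $\nabla|\nabla u|^2 = 2\Hess_u(\nabla u)$. The only thing to be careful about is distinguishing $B$ as a $(0,2)$-tensor from its musical dual $B(\nabla u)$ viewed as a vector, but this is already clarified in the statement of the lemma.
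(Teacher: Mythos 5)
Your proof is correct and is essentially identical to the paper's: both use $\nabla|\nabla u|^2 = 2\,\Hess_u(\nabla u,\cdot)$, substitute the identity \eqref{e:uhessu} for $2u\,\Hess_u$, and observe that the terms $2|\nabla u|^2\,\nabla u$ cancel, leaving $B(\nabla u)$. Nothing to add.
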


\begin{proof}
Since $\nabla |\nabla u|^2 = 2\, \Hess_u (\nabla u , \cdot )$, equation \eqr{e:uhessu} gives
\begin{align}
	u \, \nabla |\nabla u|^2 = 2\, u \, \Hess_u (\nabla u , \cdot ) = 
	 B (\nabla u , \cdot) + 2\, \left(   |\nabla u|^2 \, \nabla u  
	-  |\nabla u|^2 \,  \nabla u  \right) = B(\nabla u , \cdot) \, .
\end{align}
\end{proof}

\begin{Cor}	\label{c:k1}
We have    $2\, u \, \nabla |\nabla u| =   B(\nn)$ where $\nn = \frac{\nabla u}{|\nabla u|}$
 and $4 \, u^2 \, \left| \nabla |\nabla u| \right|^2  = \left| B(\nn)  \right|^2$.
\end{Cor}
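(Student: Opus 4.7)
The corollary is a direct algebraic consequence of Lemma \ref{l:k1}, so the plan is simply to unwind the identity $u\,\nabla|\nabla u|^2 = B(\nabla u)$ and rewrite both sides using $\nn = \nabla u/|\nabla u|$.

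First I would rewrite the left-hand side using the chain rule: away from critical points of $u$, we have $\nabla|\nabla u|^2 = 2\,|\nabla u|\,\nabla|\nabla u|$. On the right-hand side, linearity of $B$ in its argument gives $B(\nabla u) = |\nabla u|\,B(\nn)$. Substituting both into Lemma \ref{l:k1} yields
\begin{equation}
  2\,u\,|\nabla u|\,\nabla |\nabla u| = |\nabla u|\, B(\nn)\,,
\end{equation}
and cancelling the common factor $|\nabla u|$ gives the first claimed identity $2\,u\,\nabla|\nabla u| = B(\nn)$. The second identity is then obtained by taking the squared norm of both sides.

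There is essentially no obstacle; the only minor point to address is that the derivation formally requires $|\nabla u| \neq 0$. At critical points of $u$ both sides of $2\,u\,\nabla|\nabla u| = B(\nn)$ must be interpreted or handled as a limit, but for the purposes of the integral formulas in Theorem \ref{t:A2mono} (which integrate over level sets or use the identity pointwise on the regular set where $|\nabla u|\neq 0$) this causes no issue. So I expect the proof to be a two-line computation following directly from the previous lemma.
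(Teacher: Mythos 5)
Your proposal is correct and is essentially identical to the paper's own one-line proof: both use $\nabla|\nabla u|^2 = 2\,|\nabla u|\,\nabla|\nabla u|$ together with Lemma \ref{l:k1} and the homogeneity $B(\nabla u)=|\nabla u|\,B(\nn)$, then square for the second identity. Your remark about working away from critical points of $u$ is a reasonable extra precaution but not a departure from the paper's argument.
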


\begin{proof}
Since $u \, \nabla |\nabla u|^2 = 2 \, u \, |\nabla u| \, \nabla |\nabla u|$, this
  follows from Lemma \ref{l:k1}.
\end{proof}

The next lemma computes the  divergence of $B$.

\begin{Lem}	\label{l:k2}
The divergence of $B$ is 
\begin{equation}
	\delta B = \Ric (\nabla u^2 , \cdot) + (2n-2) \, \nabla |\nabla u|^2 = 
	 \Ric (\nabla u^2 , \cdot) + (2n-2) \,  u^{-1} \, B( \nabla u) \, .
\end{equation}
\end{Lem}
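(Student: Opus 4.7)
The plan is to compute $\delta B$ by splitting $B = \Hess_{u^2} - 2|\nabla u|^2 \, g$ into its two pieces and differentiating each separately, then combining the result with Lemma \ref{l:k1} to get the second equality.

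For the first piece, I would use the standard commutation formula (a direct consequence of the Ricci identity) stating that for any smooth function $f$,
\begin{equation}
\delta \Hess_f = d(\Delta f) + \Ric(\nabla f, \cdot)\, .
\end{equation}
Applied to $f = u^2$ together with the hypothesis $\Delta u^2 = 2n\,|\nabla u|^2$, this gives
\begin{equation}
\delta \Hess_{u^2} = 2n\, \nabla |\nabla u|^2 + \Ric(\nabla u^2, \cdot)\, .
\end{equation}
For the second piece, for any function $h$ the divergence of the conformal tensor $h\,g$ is just $\nabla h$ (in coordinates, $\nabla^i(h\,g_{ij}) = \nabla_j h$). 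Taking $h = 2|\nabla u|^2$ yields $\delta(2|\nabla u|^2 g) = 2\, \nabla |\nabla u|^2$.

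Subtracting, the two gradient-of-$|\nabla u|^2$ contributions combine to give
\begin{equation}
\delta B = \left( 2n - 2 \right)\, \nabla |\nabla u|^2 + \Ric(\nabla u^2, \cdot)\, ,
\end{equation}
which is the first claimed equality. The second equality then follows immediately by substituting $\nabla |\nabla u|^2 = u^{-1}\, B(\nabla u)$ from Lemma \ref{l:k1}.

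There is no real obstacle; the only thing to be careful about is the sign convention in the Ricci identity, but both terms get traced against $\nabla^i$ on the first slot of $\nabla_i \nabla_j f$, so commuting the two covariant derivatives produces exactly $R_{ji}\nabla^i f = \Ric(\nabla f,\cdot)_j$, with no stray minus sign.
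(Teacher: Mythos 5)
Your proof is correct and is essentially the paper's argument: the paper carries out the same computation in a normal coordinate frame, where the Ricci identity $w_{ijj}=w_{jji}+\Ric_{ij}w_j$ is exactly the invariant commutation formula $\delta \Hess_f = d(\Delta f)+\Ric(\nabla f,\cdot)$ you invoke, and the term $2|\nabla u|^2\,g$ is handled the same way since $g$ is parallel. The final substitution via Lemma \ref{l:k1} also matches the paper.
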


\begin{proof}
Fix a point $p \in M$ and let $e_i$ be an orthonormal frame at $p$ with $\nabla_{e_i} e_j (p) = 0$.  
Given a function $w$, the symmetry of the Hessian and the definition of the curvature give
\begin{align}	\label{e:ricciw}
	w_{ijj} = w_{jij} = w_{jji} + \Ric_{ij} w_j \, .
\end{align}
Using the definition of $B$, the fact that $g$ is parallel, and \eqr{e:ricciw} with $w=u^2$ gives
\begin{align}
	\left( \delta B \right)_i \equiv B_{ij,j} = (u^2)_{ijj} - 2\left( |\nabla u|^2 \right)_i = \Ric_{ij} \, (u^2)_j + \left( \Delta u^2 \right)_i 
	- 2\left( |\nabla u|^2 \right)_i \, .
\end{align}
Thus,   $\delta B = \Ric (\nabla u^2 , \cdot ) +  \nabla ( \Delta u^2 - 2 \, |\nabla u|^2)$.
The lemma follows since $\Delta u^2 = 2n \, |\nabla u|^2$.
\end{proof}

Using this, we can compute the Laplacian of $|\nabla u|^2$.

\begin{Lem}		\label{l:deltanu}
We have 
\begin{align}
	u^2 \, \Delta |\nabla u|^2  &= \frac{1}{2} \, \left| B \right|^2 + (2n-4) \,    B (\nabla u , \nabla u)
	+ \frac{1}{2} \, \Ric (\nabla u^2 , \nabla u^2)  \notag \\
	&= \frac{1}{2} \, \left| B \right|^2 + (n-2) \,   \langle \nabla |\nabla u|^2 , \nabla u^2 \rangle
	+ \frac{1}{2} \, \Ric (\nabla u^2 , \nabla u^2) 
	\, .
\end{align}
\end{Lem}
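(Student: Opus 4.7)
The plan is to apply one more divergence to the identity of Lemma \ref{l:k1}. Writing $u\,\nabla |\nabla u|^2 = B(\nabla u)$ as an equality of vector fields and taking $\dv$ of both sides gives
\begin{equation*}
u\,\Delta |\nabla u|^2 + \langle \nabla u, \nabla |\nabla u|^2 \rangle = (\delta B)(\nabla u) + \langle B, \Hess_u \rangle,
\end{equation*}
where the right-hand side uses the product rule $\dv(B(X)) = (\delta B)(X) + \langle B, \nabla X \rangle$ applied to the symmetric two-tensor $B$ with $X = \nabla u$.

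Each remaining ingredient has already been computed. Combining Lemma \ref{l:k2} with Lemma \ref{l:k1} gives
\begin{equation*}
(\delta B)(\nabla u) = 2\,u\,\Ric(\nabla u, \nabla u) + (2n-2)\,u^{-1}\,B(\nabla u, \nabla u),
\end{equation*}
while pairing \eqr{e:uhessu} with $B$ and using that $B$ is trace-free (so the $|\nabla u|^2\,g$ term drops out) gives
\begin{equation*}
2\,u\,\langle B, \Hess_u \rangle = |B|^2 - 2\,B(\nabla u, \nabla u).
\end{equation*}

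I then substitute these together with $\langle \nabla u, \nabla |\nabla u|^2 \rangle = u^{-1}\,B(\nabla u, \nabla u)$ (obtained by contracting Lemma \ref{l:k1} with $\nabla u$), and multiply through by $u$. The three resulting $B(\nabla u, \nabla u)$ contributions combine into a single term with coefficient $2n-4$, and the Ricci term becomes $\tfrac{1}{2}\Ric(\nabla u^2, \nabla u^2)$ since $\nabla u^2 = 2u\,\nabla u$. The second line of the conclusion is immediate from $B(\nabla u, \nabla u) = u\,\langle \nabla |\nabla u|^2, \nabla u\rangle = \tfrac{1}{2}\langle \nabla |\nabla u|^2, \nabla u^2 \rangle$.

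There is no serious conceptual obstacle, since every input is already in place by the preceding lemmas; the only care required is the bookkeeping of factors of $u$ and $2$ when collecting the three contributions involving $B(\nabla u,\nabla u)$, and remembering to use the vanishing trace of $B$ to kill the $g$ component of $\Hess_u$.
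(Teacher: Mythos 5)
Your proof is correct and follows essentially the same route as the paper: both take the divergence of the identity $u\,\nabla|\nabla u|^2 = B(\nabla u)$ from Lemma \ref{l:k1} (the paper writes it as $u^2\,\dv(u^{-1}B(\nabla u))$, you apply the product rule directly), then feed in Lemma \ref{l:k2} for $\delta B$, equation \eqr{e:uhessu} together with $\langle B,g\rangle=0$ for $\langle B,\Hess_u\rangle$, and collect the $B(\nabla u,\nabla u)$ terms into the coefficient $2n-4$. The bookkeeping of factors of $u$ and $2$ in your substitutions checks out, so no changes are needed.
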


\begin{proof}
Using the definition of the Laplacian, then Lemma \ref{l:k1}, and then Lemma \ref{l:k2} gives
\begin{align}
	u^2 \, \Delta |\nabla u|^2 &= u^2 \, \dv  \, \nabla |\nabla u|^2 = u^2\,  \dv \, \left( u^{-1} \, B(\nabla u) \right)   = u\, \langle \delta B , \nabla u \rangle + 
	   \langle B , u\, \Hess_u \rangle -   B (\nabla u , \nabla u) \notag \\
	 &= 
	 u \, \Ric (\nabla u^2 , \nabla u) + 
	 (2n-2) \, B(\nabla u  , \nabla u) \\
	 &\qquad +
	\langle B , \left\{ \frac{1}{2} \, B  +  \left(   |\nabla u|^2 \, g  
	-   \nabla u \otimes \nabla u  \right)  \right\} \rangle -   B (\nabla u , \nabla u) \,  . \notag
\end{align}
Using that $\langle B , g \rangle = 0$ (since $B$ is trace-free)  and  
$\langle B , \nabla u \otimes \nabla u \rangle = B(\nabla u , \nabla u)$ gives
\begin{align}
	u^2 \, \Delta |\nabla u|^2  = \frac{1}{2} \, \Ric (\nabla u^2 , \nabla u^2) + 
	 (2n-4) \, B(\nabla u , \nabla u) + 
	\frac{1}{2} \, \left| B \right|^2  
	\, .  \notag
\end{align}
 This gives the first equality.  The second   uses that  $u \, \nabla |\nabla u|^2 = B(\nabla u)$ by Lemma \ref{l:k1}.
\end{proof}

More generally, we  compute the Laplacian of powers of $|\nabla u|$.
 
 \begin{Pro}		\label{p:deltanu}
If   $\alpha \ne 0$, then
\begin{align}
	\frac{2}{\alpha} \,|\nabla u|^{2-\alpha} \, 
	 \Delta |\nabla u|^{\alpha}  = \frac{ \left| B \right|^2 
	 +  (\alpha - 2) \,  \left| B(\nn) \right|^2  + \Ric (\nabla u^2 , \nabla u^2) }{2u^2} \, 
	 + \frac{n-2}{u^2} \, \langle \nabla |\nabla u|^2 , \nabla u^2 \rangle 
	\, .
\end{align}
\end{Pro}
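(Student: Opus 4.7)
The plan is to reduce the Laplacian of $|\nabla u|^{\alpha}$ to the already-computed Laplacian of $|\nabla u|^2$ (Lemma \ref{l:deltanu}) together with the gradient identity $u\nabla|\nabla u|^2 = B(\nabla u)$ from Lemma \ref{l:k1}. The unnamed elementary lemma at the start of the section gives, for any positive $v$ and any real $\alpha$, the identity
\begin{equation*}
  \Delta v^{\alpha} = \alpha\, v^{\alpha-1}\!\left((\alpha-1)\frac{|\nabla v|^2}{v} + \Delta v\right).
\end{equation*}
I would apply this with $v = |\nabla u|^2$ and exponent $\alpha/2$ in place of $\alpha$, then divide through by the appropriate power of $|\nabla u|^2$ to obtain
\begin{equation*}
  \frac{2}{\alpha}\, |\nabla u|^{2-\alpha}\, \Delta |\nabla u|^{\alpha}
  \;=\; \Delta |\nabla u|^2 \;+\; \Bigl(\tfrac{\alpha}{2}-1\Bigr)\, \frac{\bigl|\nabla |\nabla u|^2\bigr|^2}{|\nabla u|^2}.
\end{equation*}

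Next I would substitute the two previously computed ingredients. For the first term, Lemma \ref{l:deltanu} directly gives
\begin{equation*}
  u^2 \, \Delta|\nabla u|^2 \;=\; \tfrac{1}{2}|B|^2 + (n-2)\langle \nabla|\nabla u|^2,\nabla u^2\rangle + \tfrac{1}{2}\Ric(\nabla u^2,\nabla u^2).
\end{equation*}
For the second term, Lemma \ref{l:k1} yields $u\,\nabla |\nabla u|^2 = B(\nabla u)$, so taking norms and dividing by $|\nabla u|^2$ produces
\begin{equation*}
  \frac{\bigl|\nabla|\nabla u|^2\bigr|^2}{|\nabla u|^2} \;=\; \frac{|B(\nabla u)|^2}{u^2\,|\nabla u|^2} \;=\; \frac{|B(\nn)|^2}{u^2},
\end{equation*}
where the last identity uses $\nn = \nabla u/|\nabla u|$.

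Plugging both expressions back and grouping terms over the common denominator $2u^2$ gives
\begin{equation*}
  \frac{2}{\alpha}\,|\nabla u|^{2-\alpha}\,\Delta|\nabla u|^{\alpha}
  = \frac{|B|^2 + (\alpha-2)|B(\nn)|^2 + \Ric(\nabla u^2,\nabla u^2)}{2u^2} + \frac{n-2}{u^2}\langle \nabla|\nabla u|^2,\nabla u^2\rangle,
\end{equation*}
which is exactly the claimed identity. There is no real obstacle here: once one writes $|\nabla u|^{\alpha}=(|\nabla u|^2)^{\alpha/2}$ and invokes the chain-rule formula, the proposition follows by pure bookkeeping from the two lemmas; the only mild subtlety is tracking that the extra gradient term $(\alpha/2-1)|\nabla|\nabla u|^2|^2/|\nabla u|^2$ is precisely what supplies the $(\alpha-2)|B(\nn)|^2$ contribution, explaining the $\alpha$-dependence in the statement.
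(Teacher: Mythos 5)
Your proposal is correct and is essentially the paper's own argument: both reduce $\Delta |\nabla u|^{\alpha}$ by the chain rule to $\Delta |\nabla u|^2$ plus an $(\alpha-2)$-weighted gradient term, then substitute Lemma \ref{l:deltanu} and the identity $u\,\nabla|\nabla u|^2=B(\nabla u)$. The only cosmetic difference is that the paper routes the extra term through $\nabla|\nabla u|$ and Corollary \ref{c:k1} ($4u^2\left|\nabla|\nabla u|\right|^2=\left|B(\nn)\right|^2$), whereas you use $\left|\nabla|\nabla u|^2\right|^2$ and Lemma \ref{l:k1} directly, which is equivalent.
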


\begin{proof}
We have
\begin{align}
	2\, \nabla \, |\nabla u|^{\alpha} = 2\, \alpha \, |\nabla u|^{\alpha - 1} \, \nabla |\nabla u|
	=   \alpha \, |\nabla u|^{\alpha - 2} \, \nabla |\nabla u|^2   \, .
\end{align}
Taking the divergence of this and using Lemma \ref{l:deltanu} gives
\begin{align}
	\frac{2}{\alpha} \, |\nabla u|^{3-\alpha  }  \, \Delta |\nabla u|^{\alpha}  &= 
	|\nabla u|^{3-\alpha  }  \,
	\dv \, \left( |\nabla u|^{\alpha - 2} \, \nabla |\nabla u|^2  \right)  \notag \\
	&= 
	|\nabla u| \,  \Delta |\nabla u|^2  + (\alpha - 2) \, \langle \nabla |\nabla u|^2 , \nabla |\nabla u| \rangle  \\
	&= \frac{|\nabla u|}{2u^2} \, \Ric (\nabla u^2 , \nabla u^2) +
	 \frac{|\nabla u|}{2u^2} \, |B|^2 + (n-2) \,  \frac{|\nabla u|}{u^2} \, \langle \nabla |\nabla u|^2 , \nabla u^2 \rangle \notag \\
	 &\qquad
	+ 2\, (\alpha - 2) \,|\nabla u| \, \left| \nabla |\nabla u| \right|^2
	\, . \notag 
\end{align}
Since $4 \, u^2 \, \left| \nabla |\nabla u| \right|^2  = \left| B(\nn)  \right|^2$ by Corollary \ref{c:k1}, 
simplifying this gives the proposition.
\end{proof}

When $\alpha = 1$, we get the following corollary:

 \begin{Cor}		\label{c:deltanu}
We have
\begin{align}
	2 \,|\nabla u| \, 
	 \Delta |\nabla u|   = \frac{1}{2u^2} \,  \left( \left| B \right|^2 
	  -   \left| B(\nn) \right|^2
	  + \Ric (\nabla u^2 , \nabla u^2)
	   \right)
	 + \frac{n-2}{u^2} \, \langle \nabla |\nabla u|^2 , \nabla u^2 \rangle 
	\, .
\end{align}
\end{Cor}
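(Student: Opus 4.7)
The plan is to obtain the corollary by direct substitution of $\alpha = 1$ into Proposition \ref{p:deltanu}, since the parenthetical lead-in (``When $\alpha = 1$, we get the following corollary'') makes the author's intent explicit and the hypothesis $\alpha \neq 0$ is satisfied. With $\alpha = 1$ the factor $\frac{2}{\alpha} \, |\nabla u|^{2-\alpha}$ on the left collapses to $2 \, |\nabla u|$, yielding $2 \, |\nabla u| \, \Delta |\nabla u|$, and on the right the scalar coefficient $(\alpha - 2)$ of $|B(\nn)|^2$ becomes $-1$, which produces the $-|B(\nn)|^2$ inside the parentheses. All remaining terms are copied verbatim.

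Since this is a pure specialization, there is no real obstacle: nothing needs to be checked beyond reading off the value of each coefficient at $\alpha = 1$. As a sanity check one could also derive the corollary from scratch without invoking Proposition \ref{p:deltanu}, using the identity
\begin{equation*}
\Delta |\nabla u|^2 \;=\; 2 \, |\nabla u| \, \Delta |\nabla u| \;+\; 2 \, \bigl| \nabla |\nabla u| \bigr|^2 ,
\end{equation*}
then expressing $\Delta |\nabla u|^2$ via Lemma \ref{l:deltanu} and $\bigl| \nabla |\nabla u| \bigr|^2 = \frac{|B(\nn)|^2}{4u^2}$ via Corollary \ref{c:k1}. This alternative route recovers exactly the same formula and explains geometrically where the extra $-|B(\nn)|^2$ contribution arises (from the gradient-squared term that distinguishes $\Delta |\nabla u|^2$ from $2|\nabla u| \Delta |\nabla u|$). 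Either presentation is a one-line derivation; the substitution into Proposition \ref{p:deltanu} is the cleaner choice.
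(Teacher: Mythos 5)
Your proposal is correct and matches the paper exactly: Corollary \ref{c:deltanu} is obtained by setting $\alpha = 1$ in Proposition \ref{p:deltanu}, which is precisely what the paper intends by the phrase ``When $\alpha = 1$, we get the following corollary.'' Your alternative sanity-check derivation via Lemma \ref{l:deltanu} and Corollary \ref{c:k1} is also valid, but the substitution is the paper's (and the cleaner) route.
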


 \subsection{The trace-free second fundamental form}
 
The second fundamental form $\II$ of the level sets of $u$ is given by
\begin{align}
	\II (e_i , e_j) \equiv \langle \nabla_{e_i} \nn , e_j \rangle \, , 
\end{align}
where $e_i$ is a tangent frame and $\nn = \frac{ \nabla u}{|\nabla u|}$ is the unit normal. 

\begin{Lem}	\label{l:tracefree}
 The trace-free second fundamental form $\II_0$ is given by
\begin{align}
	2 \, u \, |\nabla u| \,  \II_0  = B_0  + \frac{B(\nn , \nn)}{n-1}     \, g_0 \, ,
\end{align}
where    $B_0$ is the restriction of $B$ to  tangent vectors and $g_0$ is the metric on the level set.
\end{Lem}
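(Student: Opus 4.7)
The plan is to reduce the computation to the identity \eqr{e:uhessu} relating $\Hess_u$ to $B$, by first expressing the second fundamental form purely in terms of $\Hess_u$ and then subtracting the trace.

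First I would observe that for any tangent vector $e_j$ to a level set of $u$, the identity $\langle \nabla u, e_j \rangle = 0$ together with the definition of $\nn$ gives
\begin{align}
	|\nabla u| \, \II(e_i, e_j) = |\nabla u| \, \langle \nabla_{e_i} \nn, e_j \rangle = \Hess_u(e_i, e_j)\, ,
\end{align}
for tangent vectors $e_i,e_j$, since the derivative of $|\nabla u|$ contributes a radial term that is killed by pairing with $e_j$. Feeding this into \eqr{e:uhessu} and noting that $(\nabla u \otimes \nabla u)(e_i,e_j) = 0$ and $g(e_i,e_j) = g_0(e_i,e_j)$ on tangent vectors, I get
\begin{align}
	2 \, u \, |\nabla u| \, \II = B_0 + 2 \, |\nabla u|^2 \, g_0 \, .
\end{align}

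Next I would take the trace with respect to $g_0$. Since $B$ is trace-free on $M$, the trace of $B_0$ along the level set is $-B(\nn,\nn)$, and $\Tr_{g_0} g_0 = n-1$, so the mean curvature $H = \Tr_{g_0} \II$ satisfies
\begin{align}
	2 \, u \, |\nabla u| \, H = -B(\nn,\nn) + 2(n-1) \, |\nabla u|^2 \, .
\end{align}

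Finally I would subtract $\frac{H}{n-1} g_0$ from $\II$ to extract the trace-free part:
\begin{align}
	2 \, u \, |\nabla u| \, \II_0 = 2\, u \, |\nabla u| \, \II - \frac{2 \, u \, |\nabla u| \, H}{n-1} \, g_0
	= B_0 + 2\, |\nabla u|^2 \, g_0 - \left( 2\, |\nabla u|^2 - \frac{B(\nn,\nn)}{n-1} \right) g_0 \, ,
\end{align}
from which the desired formula follows. I do not anticipate any real obstacle; the entire argument is bookkeeping with \eqr{e:uhessu} and the definition of $\II$, and the only mildly delicate point is remembering that $\Tr_{g_0} B_0 = -B(\nn,\nn)$ (rather than $0$) because the trace-free condition on $B$ is with respect to the ambient metric, not the induced one.
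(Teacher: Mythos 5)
Your proposal is correct and follows essentially the same route as the paper: express $\II$ on tangent vectors through the Hessian, compute $H$ by taking the trace, and subtract $\frac{H}{n-1}\,g_0$. The only cosmetic difference is that you work with $\Hess_u$ via \eqr{e:uhessu} and get $H$ from $\Tr_{g_0} B_0 = -B(\nn,\nn)$, whereas the paper works with $\Hess_{u^2}$ and invokes $\Delta u^2 = 2n\,|\nabla u|^2$ directly; these are the same fact in two guises, since the trace-freeness of $B$ is exactly that equation.
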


\begin{proof}
 Using that $\nabla u$ is normal, we can rewrite $\II$ as
\begin{align}
	2 \, u \, |\nabla u| \,  \II (e_i , e_j) = \langle \nabla_{e_i}   \nabla u^2   , e_j \rangle
	= \Hess_{u^2} (e_i , e_j)  \, .
\end{align}
The mean curvature $H$ is the trace of $\II$ over the $e_i$'s.  We have
\begin{align}
	2 \, u \, |\nabla u| \, H &= \Delta u^2 - \Hess_{u^2} (\nn  , \nn) = 2n \, |\nabla u|^2 - \Hess_{u^2} (\nn  , \nn) \notag \\
	&= 2(n-1) \, |\nabla u|^2 + \left( 2\, |\nabla u|^2 -  \Hess_{u^2} (\nn  , \nn)  \right) \\
	&= 2(n-1) \, |\nabla u|^2  - B(\nn , \nn) \, . \notag
\end{align}
Thus,  the trace-free second fundamental form $\II_0$ is given by
\begin{align}
	2 \, u \, |\nabla u| \,  \II_0 &= 2 \, u \, |\nabla u| \,  \left(  \II - \frac{H}{n-1} \, g_0\right)  = 
	\Hess_{u^2}   -   2\, |\nabla u|^2 \, g_0 + \frac{B(\nn , \nn)}{n-1}     \, g_0 \notag \\
	&= B_0  + \frac{B(\nn , \nn)}{n-1}     \, g_0 \, ,
\end{align}
where  $\Hess_{u^2}$ is restricted to   tangent vectors.
 \end{proof}

 \begin{Lem}	\label{l:B0}
We have 
 \begin{align}
 	4 \, u^2 \, |\nabla u|^2 \, \left| \II_0 \right|^2 &=   |B_0|^2 - \frac{(B(\nn , \nn))^2}{n-1}
	 =  \left| B \right|^2 
	  -   \frac{n}{n-1} \, \left| B(\nn)  \right|^2  -   \frac{n-2}{n-1} \, \left| B(\nn)^T \right|^2 \, .
 \end{align}
 \end{Lem}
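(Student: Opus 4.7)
The plan is to derive both equalities directly from Lemma \ref{l:tracefree} together with a frame decomposition of the tensor $B$ adapted to $\nn$.

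For the first equality, I would start by squaring the identity
\begin{align*}
  2\, u \, |\nabla u| \, \II_0 = B_0 + \frac{B(\nn,\nn)}{n-1} \, g_0
\end{align*}
to obtain
\begin{align*}
  4 u^2 |\nabla u|^2 |\II_0|^2 = |B_0|^2 + \frac{2\, B(\nn,\nn)}{n-1} \, \langle B_0, g_0 \rangle + \frac{B(\nn,\nn)^2}{(n-1)^2} \, |g_0|^2.
\end{align*}
Since $g_0$ is the metric on a codimension-one level set, $|g_0|^2 = n-1$, and $\langle B_0, g_0\rangle = \Tr(B_0)$. Because $B$ is trace-free as a tensor on $M$, we have $\Tr(B_0) = \Tr(B) - B(\nn,\nn) = -B(\nn,\nn)$. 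Plugging in, the cross term cancels half of the last term and yields
\begin{align*}
  4 u^2 |\nabla u|^2 |\II_0|^2 = |B_0|^2 - \frac{B(\nn,\nn)^2}{n-1}.
\end{align*}

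For the second equality, I would pick an orthonormal frame with $\nn$ as the last vector and the remaining $e_1,\ldots,e_{n-1}$ tangent to the level set. Decomposing $|B|^2$ into tangent-tangent, tangent-normal, and normal-normal blocks gives
\begin{align*}
  |B|^2 = |B_0|^2 + 2\,|B(\nn)^T|^2 + B(\nn,\nn)^2, \qquad |B(\nn)|^2 = |B(\nn)^T|^2 + B(\nn,\nn)^2.
\end{align*}
The second identity lets me rewrite $B(\nn,\nn)^2 = |B(\nn)|^2 - |B(\nn)^T|^2$; substituting this into the first gives $|B_0|^2 = |B|^2 - |B(\nn)|^2 - |B(\nn)^T|^2$. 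Inserting both expressions into the formula from the first equality and collecting coefficients of $|B(\nn)|^2$ and $|B(\nn)^T|^2$ produces $-\tfrac{n}{n-1}$ and $-\tfrac{n-2}{n-1}$ respectively, which is what is claimed.

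There is no real obstacle: once Lemma \ref{l:tracefree} is in hand the only inputs are $\Tr(B)=0$ and the tangential/normal decomposition in an adapted frame, and the rest is a short algebraic simplification.
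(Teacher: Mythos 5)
Your proposal is correct and follows essentially the same route as the paper: square the identity of Lemma \ref{l:tracefree} using $|g_0|^2 = n-1$ and $\langle B_0 , g_0 \rangle = -B(\nn,\nn)$ from trace-freeness, then use the adapted-frame decomposition $|B|^2 = |B_0|^2 + 2|B(\nn)^T|^2 + (B(\nn,\nn))^2$ together with $|B(\nn)|^2 = |B(\nn)^T|^2 + (B(\nn,\nn))^2$ to get the second equality. The algebra checks out in both steps.
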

 
 \begin{proof}
Since $B$ is trace-free, we get that
 \begin{equation}
 	\langle B_0 , g_0 \rangle =   \Tr (B) - B(\nn , \nn) = - B(\nn , \nn) \, , 
 \end{equation}
 Using this in   Lemma \ref{l:tracefree} 
gives
 \begin{align}
 	4 \, u^2 \, |\nabla u|^2 \, \left| \II_0 \right|^2 &=   
	 \left| B_0 \right|^2  + \frac{\left(  B(\nn , \nn)  \right)^2}{ (n-1)^2} \, |g_0|^2  + 2\, \frac{B(\nn , \nn)}{n-1} \langle B_0 , g_0 \rangle \notag \\
	 &=   \left| B_0 \right|^2  + \frac{\left(  B(\nn , \nn)  \right)^2}{ n-1} - 2\, \frac{(B(\nn , \nn))^2}{n-1}
	 \, .
 \end{align}
 This gives the first equality.  To get the second equality,  use the symmetry of $B$ to get
 \begin{align}
 	\left| B \right|^2 &= \sum_{i,j \leq (n-1)} \, \, (B(e_i , e_j))^2 + 2 \, \sum_{i \leq (n-1)} (B(\nn , e_i))^2 + (B(\nn , \nn))^2 \notag \\
	& = \left| B_0 \right|^2 + 2\, \left| B(\nn)^T \right|^2 + (B(\nn , \nn ))^2  
\end{align}
and note that $\left| B(\nn) \right|^2 = \left|  B(\nn)^T \right|^2 + (B(\nn , \nn))^2$.
 \end{proof}

   \subsection{Divergence formulas}
   
   We will compute the divergence of various quantities involving $u$ and $|\nabla u|$.  
   We will need the following differential inequality for $|\nabla u|$.
 
 \begin{Pro}	\label{p:ptog}  
 If   $\II_0$ is the trace-free second fundamental form of the level set,   then 
 \begin{align}
	 \Delta |\nabla u|   &=   |\nabla u|        \left| \II_0 \right|^2  + \frac{ \Ric (\nabla u , \nabla u)}{|\nabla u|}
	 + \frac{n-2}{u^2} \, \langle \nabla |\nabla u| , \nabla u^2 \rangle \notag \\
	 &\qquad +   \frac{ \left(    \left| B(\nn) \right|^2 + (n-2) \, \left| B(\nn)^T \right|^2  \right)}{4(n-1) \, |\nabla u| \, u^2} 
	\, .
\end{align}
 \end{Pro}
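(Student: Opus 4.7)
The plan is to derive the formula from Corollary~\ref{c:deltanu} by trading $|B|^2$ for the geometric quantity $|\II_0|^2$ via Lemma~\ref{l:B0}, and then rescaling from $\nabla u^2$ to $\nabla u$.

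First, I would start with Corollary~\ref{c:deltanu}, which already expresses $2|\nabla u|\,\Delta|\nabla u|$ in terms of $|B|^2$, $|B(\nn)|^2$, $\Ric(\nabla u^2,\nabla u^2)$, and the cross term $\langle \nabla |\nabla u|^2,\nabla u^2\rangle$. The key algebraic substitution is Lemma~\ref{l:B0}, which I would rewrite as
\begin{equation}
|B|^2 = 4u^2|\nabla u|^2|\II_0|^2 + \tfrac{n}{n-1}|B(\nn)|^2 + \tfrac{n-2}{n-1}|B(\nn)^T|^2.
\end{equation}
Plugging this into Corollary~\ref{c:deltanu}, the $-|B(\nn)|^2$ term combines with $\tfrac{n}{n-1}|B(\nn)|^2$ to produce the clean coefficient $\tfrac{1}{n-1}|B(\nn)|^2$, so that
\begin{equation}
|B|^2 - |B(\nn)|^2 = 4u^2|\nabla u|^2|\II_0|^2 + \tfrac{1}{n-1}\left(|B(\nn)|^2 + (n-2)|B(\nn)^T|^2\right).
\end{equation}

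Next, I would convert every quantity involving $u^2$ to the corresponding quantity in $u$. Since $\nabla u^2 = 2u\nabla u$, one has $\Ric(\nabla u^2,\nabla u^2) = 4u^2\Ric(\nabla u,\nabla u)$; similarly $\nabla |\nabla u|^2 = 2|\nabla u|\nabla|\nabla u|$ yields $\langle \nabla|\nabla u|^2,\nabla u^2\rangle = 4u|\nabla u|\langle \nabla|\nabla u|,\nabla u\rangle$. After these substitutions, the identity from Corollary~\ref{c:deltanu} becomes
\begin{align}
2|\nabla u|\,\Delta|\nabla u| &= 2|\nabla u|^2|\II_0|^2 + \frac{|B(\nn)|^2 + (n-2)|B(\nn)^T|^2}{2(n-1)u^2} + 2\Ric(\nabla u,\nabla u) \notag\\
&\quad + \frac{2(n-2)|\nabla u|}{u^2}\,\langle \nabla|\nabla u|,\nabla u^2\rangle,
\end{align}
where in the last term I kept $\nabla u^2$ so that the answer matches the stated form. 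Dividing by $2|\nabla u|$ gives the proposition.

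The calculation is essentially bookkeeping; the only nontrivial point is verifying that the $|B(\nn)|^2$ contributions from Corollary~\ref{c:deltanu} (coefficient $-1$) and from Lemma~\ref{l:B0} (coefficient $+\tfrac{n}{n-1}$) combine correctly to produce the factor $\tfrac{1}{4(n-1)}$ in the final denominator. Since no additional geometric input is needed beyond the two earlier results, I do not anticipate a genuine obstacle — the proof is a direct substitution.
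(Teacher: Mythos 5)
Your proposal is correct and follows essentially the same route as the paper: start from Corollary \ref{c:deltanu}, substitute Lemma \ref{l:B0} to replace $\left| B \right|^2 - \left| B(\nn) \right|^2$ by $4u^2|\nabla u|^2 \left| \II_0 \right|^2 + \tfrac{1}{n-1}\left( \left| B(\nn) \right|^2 + (n-2)\left| B(\nn)^T \right|^2 \right)$, rescale $\nabla u^2 = 2u\,\nabla u$, and divide by $2|\nabla u|$. The bookkeeping checks out, so there is nothing to add.
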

 
 \begin{proof}
Corollary \ref{c:deltanu}
gives that
\begin{align}
	|\nabla u| \, 
	 \Delta |\nabla u|   &= \frac{1}{4u^2} \,  \left( \left| B \right|^2 
	  -   \left| B(\nn) \right|^2 + \Ric (\nabla u^2 , \nabla u^2) \right)  
	  	 + \frac{n-2}{u^2} \,|\nabla u|  \langle \nabla |\nabla u| , \nabla u^2 \rangle 
	\, .
\end{align}
The next ingredient is Lemma \ref{l:B0} which
gives that
 \begin{align}
		4 \, u^2 \, |\nabla u|^2 \, \left| \II_0 \right|^2 &= \left(   \left| B \right|^2 -  \left| B(\nn)  \right|^2 \right) - \frac{ \left| B(\nn)  \right|^2}{n-1} - \frac{n-2}{n-1} \, \left| B(\nn)^T \right|^2 \, ,
 \end{align}
 so we see that
 \begin{align}
 	\frac{ \left( \left| B \right|^2 
	  -   \left| B(\nn) \right|^2 \right)}{4\, |\nabla u| \, u^2}  = |\nabla u| \, \left| \II_0 \right|^2
	  + \frac{ \left(    \left| B(\nn) \right|^2 + (n-2) \, \left| B(\nn)^T \right|^2  \right)}{4(n-1) \, |\nabla u| \, u^2} 
 \end{align}
 \end{proof}

 \begin{Lem}	\label{l:divforms}
Given $p , \alpha \in \RR$, we have
 \begin{align}
 	&\dv \, \left( u^{2p} \, |\nabla u|^{\alpha} \, \nabla u^2 \right)  =
	(2n+4p) \, u^{2p} \, |\nabla u|^{2+ \alpha}  + \alpha \, u^{2p} \, |\nabla u|^{\alpha - 1} \, 
	\langle \nabla |\nabla u| , \nabla u^2 \rangle   \, , \\
	&\dv \, \left( u^{2p} \, |\nabla u|^{\alpha} \nabla |\nabla u| \right) =   
	u^{2p-2} \, (p+n -2)\, |\nabla u|^{\alpha} \, 
	\langle \nabla u^2 , \nabla |\nabla u| \rangle \\
	& + u^{2p} \,  |\nabla u|^{1+\alpha}    \left(    \left| \II_0 \right|^2  + \Ric (\nn , \nn) 
	+   \frac{   (1+\alpha (n-1))  \left| B(\nn) \right|^2 + (n-2) \, \left| B(\nn)^T \right|^2  }{4(n-1) \, |\nabla u|^2 \, u^2} \right)  \notag \, .
 \end{align}
 \end{Lem}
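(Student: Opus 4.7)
The plan is to apply the product rule $\dv(fX) = f\,\dv X + \langle \nabla f, X\rangle$ to each of the two vector fields and then substitute the structural identities already proved in the excerpt. In both cases the ``outside'' scalar is the same, $f = u^{2p}|\nabla u|^{\alpha}$, with gradient
$\nabla f = 2p\,u^{2p-1}|\nabla u|^{\alpha}\,\nabla u + \alpha\,u^{2p}|\nabla u|^{\alpha-1}\,\nabla|\nabla u|$, so it is worth computing this once.

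For the first identity, take $X = \nabla u^2$, so that $\dv X = \Delta u^2 = 2n\,|\nabla u|^2$ by Lemma~\ref{l:}. Pairing $\nabla f$ with $\nabla u^2$ and using $\langle \nabla u, \nabla u^2\rangle = 2u\,|\nabla u|^2$ turns the $2p$-piece into $4p\,u^{2p}|\nabla u|^{\alpha+2}$, which merges with $f\,\dv X = 2n\,u^{2p}|\nabla u|^{\alpha+2}$ to give the coefficient $(2n+4p)$. The $\alpha$-piece is left untouched as $\alpha\,u^{2p}|\nabla u|^{\alpha-1}\,\langle \nabla|\nabla u|, \nabla u^2\rangle$. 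This identity is pure bookkeeping.

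For the second identity, take $X = \nabla|\nabla u|$ and substitute Proposition~\ref{p:ptog} for $\dv X = \Delta|\nabla u|$. Multiplying through by $f = u^{2p}|\nabla u|^{\alpha}$ produces the advertised $u^{2p}|\nabla u|^{1+\alpha}\bigl(|\II_0|^2 + \Ric(\nn,\nn)\bigr)$, an $(n-2)$-weighted cross term $(n-2)\,u^{2p-2}|\nabla u|^{\alpha}\,\langle \nabla|\nabla u|, \nabla u^2\rangle$, and a $\frac{1}{4(n-1)}\,u^{2p-2}|\nabla u|^{\alpha-1}\bigl(|B(\nn)|^2 + (n-2)\,|B(\nn)^T|^2\bigr)$ piece. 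Now add $\langle \nabla f, X\rangle$. The $2p$-piece becomes $p\,u^{2p-2}|\nabla u|^{\alpha}\,\langle \nabla u^2, \nabla|\nabla u|\rangle$ after writing $\nabla u = \tfrac{1}{2u}\nabla u^2$, and fuses with the $(n-2)$ cross term to yield the stated coefficient $(p+n-2)$.

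The only step that is not purely clerical is handling the remaining $\alpha$-piece $\alpha\,u^{2p}|\nabla u|^{\alpha-1}\bigl|\nabla|\nabla u|\bigr|^2$. Here Corollary~\ref{c:k1} converts $\bigl|\nabla|\nabla u|\bigr|^2$ into $\tfrac{|B(\nn)|^2}{4u^2}$, producing $\tfrac{\alpha}{4}\,u^{2p-2}|\nabla u|^{\alpha-1}|B(\nn)|^2$. Combining with the $\tfrac{1}{4(n-1)}$-coefficient already in hand gives the total coefficient $\tfrac{1+\alpha(n-1)}{4(n-1)}$ for $|B(\nn)|^2$, while the $(n-2)\,|B(\nn)^T|^2$ coefficient is untouched. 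Factoring out $u^{2p}|\nabla u|^{1+\alpha}$ puts the result in the form stated. The only potential obstacle is bookkeeping: keeping the powers of $u$ and $|\nabla u|$ straight and ensuring Proposition~\ref{p:ptog} and Corollary~\ref{c:k1} are invoked with compatible normalizations.
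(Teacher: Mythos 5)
Your proposal is correct and follows essentially the same route as the paper: the product rule combined with $\Delta u^2 = 2n|\nabla u|^2$ for the first identity, and Proposition \ref{p:ptog} plus Corollary \ref{c:k1} (to trade $\left|\nabla|\nabla u|\right|^2$ for $\tfrac{1}{4}u^{-2}|B(\nn)|^2$) for the second, with the coefficients merging to $(p+n-2)$ and $\tfrac{1+\alpha(n-1)}{4(n-1)}$ exactly as in the paper. The only cosmetic difference is that the paper first computes $\dv(u^{2p}\nabla|\nabla u|)$ and then multiplies in $|\nabla u|^{\alpha}$, whereas you carry the full scalar factor from the start.
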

 
 \begin{proof}
 For the first claim, use $\Delta u^2 = 2n \, |\nabla u|^2$ to compute
  \begin{align}
 	\dv \, \left( u^{2p} \, |\nabla u|^{\alpha} \, \nabla u^2 \right) & = p \, u^{2p-2} \, |\nabla u|^{\alpha} \, 
	\left| \nabla u^2 \right|^2 + u^{2p} \, |\nabla u|^{\alpha} \Delta u^2 + \alpha \, u^{2p} \, |\nabla u|^{\alpha - 1} \, 
	\langle \nabla |\nabla u| , \nabla u^2 \rangle    \notag \\
 	&=(2n+4p) \, u^{2p} \, |\nabla u|^{2+ \alpha}  + \alpha \, u^{2p} \, |\nabla u|^{\alpha - 1} \, 
	\langle \nabla |\nabla u| , \nabla u^2 \rangle    \, .
 \end{align}
 To get the second claim, first
use Proposition \ref{p:ptog} to compute
 \begin{align}
 	\dv \, \left( u^{2p} \, \nabla |\nabla u| \right) &=  u^{2p} \, \Delta |\nabla u| +
	p \, u^{2p-2} \, \langle \nabla u^2 , \nabla |\nabla u| \rangle 
	\notag \\
	&=
	  u^{2p} \left(   |\nabla u|        \left| \II_0 \right|^2  + |\nabla u| \,  \Ric (\nn, \nn) 
	 + \frac{n-2}{u^2} \, \langle \nabla |\nabla u| , \nabla u^2 \rangle \right. \notag \\
	 &\qquad + \left. 
	   \frac{ \left(    \left| B(\nn) \right|^2 + (n-2) \, \left| B(\nn)^T \right|^2  \right)}{4(n-1) \, |\nabla u| \, u^2}  \right) +
	p \, u^{2p-2} \, \langle \nabla u^2 , \nabla |\nabla u| \rangle \\
	&=  u^{2p} \,  |\nabla u|    \left(    \left| \II_0 \right|^2  + \Ric (\nn , \nn)
	+   \frac{ \left(    \left| B(\nn) \right|^2 + (n-2) \, \left| B(\nn)^T \right|^2  \right)}{4(n-1) \, |\nabla u|^2 \, u^2}  \right) 
	\notag \\
	&\qquad
	+  u^{2p-2} \, (p+n -2)\,
	\langle \nabla u^2 , \nabla |\nabla u| \rangle
	\, . \notag 
\end{align}
The second claim follows from this since
 \begin{align}
 	\dv \, \left( u^{2p} \, |\nabla u|^{\alpha} \nabla |\nabla u| \right) &= |\nabla u|^{\alpha} \, 
	\dv \, \left( u^{2p} \,  \nabla |\nabla u| \right)
	+ \alpha \, \, |\nabla u|^{\alpha - 1} \,  
	   u^{2p} \left|  \nabla |\nabla u| \right|^2  \notag \\
	    &= |\nabla u|^{\alpha} \, 
	\dv \, \left( u^{2p} \,  \nabla |\nabla u| \right)
	+ \frac{\alpha}{4}  \, \, |\nabla u|^{\alpha - 1}  
	   u^{2p-2}  \,  \left| B(\nn ) \right|^2
		\, ,
\end{align}
where the last equality used  that $4\, u^2 \, \left| \nabla |\nabla u| \right|^2 = \left| B(\nn ) \right|^2 $ by Corollary \ref{c:k1}.
 \end{proof}
 
 The previous divergence formulas allow us next to compute the Laplacian on various combinations of $u$ and $|\nabla u|$.  Recall that $\tilde{\beta} \geq 0$ was defined in \eqr{e:tildeb}.
 
 \begin{Pro}	\label{p:dvs}
 Given $q, \beta \in \RR$, we have
 \begin{align}
 	\Delta \, \left( u^{2q} \, |\nabla u|^{\beta} \right) & =	2\,q\, (2q+n-2) \, u^{2q-2} \, |\nabla u|^{2+\beta}   + \beta \,  
	 u^{2q} \, |\nabla u|^{\beta} \left( \left| \II_0 \right|^2 + \Ric (\nn , \nn)    \right) \notag \\
	&\qquad + \frac{ \beta}{4(n-1)} \,  
	 u^{2q-2} \, |\nabla u|^{\beta-2} 
	 \, \left( \tilde{\beta} \, \left| B(\nn )\right|^2 + (n-2) \, \left| B(\nn)^T \right|^2
	\right)   \\
	&\qquad + \beta \, (2q+n-2) \,    u^{2q-2}  \, |\nabla u|^{\beta - 1} \langle \nabla |\nabla u| , \nabla u^2 \rangle
	  \, . \notag 
 \end{align}
 \end{Pro}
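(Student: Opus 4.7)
The plan is to express $\Delta(u^{2q}|\nabla u|^\beta)$ as the divergence of a gradient and then apply the two identities in Lemma \ref{l:divforms}. Using the product rule for the gradient,
\begin{align}
\nabla \bigl( u^{2q} |\nabla u|^\beta \bigr) = q \, u^{2q-2} |\nabla u|^\beta \, \nabla u^2 + \beta \, u^{2q} |\nabla u|^{\beta-1} \, \nabla |\nabla u| \, ,
\end{align}
so taking another divergence splits the Laplacian into the two pieces
\begin{align}
\Delta \bigl( u^{2q} |\nabla u|^\beta \bigr) = q \, \dv \bigl( u^{2(q-1)} |\nabla u|^\beta \, \nabla u^2 \bigr) + \beta \, \dv \bigl( u^{2q} |\nabla u|^{\beta-1} \, \nabla |\nabla u| \bigr) \, .
\end{align}

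The first piece is handled by applying the first identity of Lemma \ref{l:divforms} with $p=q-1$ and $\alpha = \beta$, which produces the coefficient $2n + 4(q-1) = 2(2q+n-2)$ in front of $u^{2q-2}|\nabla u|^{2+\beta}$ together with one copy of the cross term $\beta \, u^{2q-2} |\nabla u|^{\beta-1} \langle \nabla |\nabla u|, \nabla u^2 \rangle$. The second piece is handled by the second identity of Lemma \ref{l:divforms} with $p=q$ and $\alpha = \beta-1$, which produces the $|\II_0|^2 + \Ric(\nn,\nn)$ term, the $B(\nn)$ and $B(\nn)^T$ contributions with coefficient $\bigl(1 + (\beta-1)(n-1)\bigr) = \tilde{\beta}$ on $|B(\nn)|^2$ and $(n-2)$ on $|B(\nn)^T|^2$, and a second copy of the cross term with coefficient $(q+n-2)$.

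Finally I would collect terms. The $|\nabla u|^{2+\beta}$ terms combine to give $2q(2q+n-2)$, the cross terms combine to $\beta(q + (q+n-2)) = \beta(2q+n-2)$ times $u^{2q-2}|\nabla u|^{\beta-1} \langle \nabla |\nabla u|, \nabla u^2 \rangle$, and the remaining terms from the second divergence are already in the claimed form after multiplying through by $\beta$. Matching the coefficient $1+(\beta-1)(n-1)$ to the definition of $\tilde{\beta}$ in \eqr{e:tildeb} completes the identification.

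The only real obstacle is bookkeeping: there are four distinct types of terms on the right-hand side, and the cross term $\langle \nabla|\nabla u|, \nabla u^2\rangle$ arises from both divergences and must be added carefully. All the geometric content — curvature, the second fundamental form, and the tensor $B$ — has already been packaged in Lemma \ref{l:divforms}, so no further differential-geometric input is needed.
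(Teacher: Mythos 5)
Your proposal is correct and follows exactly the paper's own argument: differentiate $u^{2q}|\nabla u|^{\beta}$, split the Laplacian into the two divergences, and apply the two identities of Lemma \ref{l:divforms} with $p=q-1$, $\alpha=\beta$ and $p=q$, $\alpha=\beta-1$ respectively, then collect the $|\nabla u|^{2+\beta}$ and cross terms. The coefficient bookkeeping ($2q(2q+n-2)$, $\beta(2q+n-2)$, and $\tilde{\beta}=1+(\beta-1)(n-1)$) all matches the paper's computation.
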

 
 \begin{proof}
 The gradient is given by 
 \begin{align}
 	\nabla \left( u^{2q} \, |\nabla u|^{\beta} \right)  = q\,  u^{2q-2} \, |\nabla u|^{\beta} \nabla u^2  
	+ \beta \, u^{2q} \, |\nabla u|^{\beta - 1} \, \nabla |\nabla u| \, .
\end{align}
Taking the divergence of this and then applying the first claim in Lemma \ref{l:divforms} with
$p=q-1$ and $\alpha = \beta$ and the second claim there with $q=p$ and $\alpha = \beta -1$
gives
  \begin{align}
 	\Delta \, \left( u^{2q} \, |\nabla u|^{\beta} \right) & = 
	q\, \dv \, \left( u^{2q-2} \, |\nabla u|^{\beta} \nabla u^2  \right)
	+ \beta \, \dv \left( u^{2q} \, |\nabla u|^{\beta - 1} \, \nabla |\nabla u|
	\right) \notag \\
	&= 
	q\, \left\{  (2n+4q-4) \, u^{2q-2} \, |\nabla u|^{2+\beta} + \beta \, u^{2q-2} \, |\nabla u|^{\beta -1} \, \langle \nabla |\nabla u| , \nabla u^2 \rangle 
	\right\} \\
	&\qquad + \beta \,  
	 u^{2q} \, |\nabla u|^{\beta} \left( \left| \II_0 \right|^2 + \Ric (\nn , \nn)    \right)
	 + \beta \,   u^{2q-2} \, (q+n-2) \, |\nabla u|^{\beta - 1} \langle \nabla |\nabla u| , \nabla u^2 \rangle \notag \\
	&\qquad + \frac{ \beta \,  
	 u^{2q-2} \, |\nabla u|^{\beta-2} }{4(n-1)} 
	 \, \left(   \tilde{\beta} \, \left| B(\nn )\right|^2 + (n-2) \, \left| B(\nn)^T \right|^2
	\right) 
	  \, . \notag 
 \end{align}
 \end{proof}
 
We separately record the cases   $2q= 2-n$ and $q=0$ next.
  
 \begin{Cor}	\label{c:optimize}
We have
 \begin{align}
 	\Delta \, \left( u^{2-n} \, |\nabla u|^{\beta} \right) &=
	\beta \,  
	 u^{2-n} \, |\nabla u|^{\beta} \left( \left| \II_0 \right|^2 + \Ric (\nn , \nn)    \right) \notag \\
	&\qquad + \frac{ \beta}{4(n-1)} \,  
	 u^{-n} \, |\nabla u|^{\beta-2} 
	 \, \left(   \tilde{\beta} \, \left| B(\nn )\right|^2 + (n-2) \, \left| B(\nn)^T \right|^2
	\right) 
	   \, , \\
	\Delta \, |\nabla u|^{\beta} &=
	\beta \,  
	  |\nabla u|^{\beta} \left( \left| \II_0 \right|^2 + \Ric (\nn , \nn)    \right) 
	  + \beta \, (n-2) \,    u^{-2}  \, |\nabla u|^{\beta - 1} \langle \nabla |\nabla u| , \nabla u^2 \rangle  \notag \\
	&\qquad + \frac{ \beta}{4(n-1)} \,  
	 u^{-2} \, |\nabla u|^{\beta-2} 
	 \, \left(  \tilde{\beta}  \, \left| B(\nn )\right|^2 + (n-2) \, \left| B(\nn)^T \right|^2
	\right)   		\, .  
 \end{align}
 \end{Cor}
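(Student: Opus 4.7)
The plan is to derive both identities as direct specializations of Proposition \ref{p:dvs}, simply by choosing $q$ so that one or more of the four terms on the right-hand side drops out. No new computation is needed beyond substitution.

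For the first identity, I would set $2q = 2-n$, i.e.\ $q = \frac{2-n}{2}$. The key observation is that with this choice $2q + n - 2 = 0$, so the coefficient $2q(2q+n-2)$ in front of $u^{2q-2}|\nabla u|^{2+\beta}$ vanishes, and simultaneously the coefficient $\beta(2q+n-2)$ in front of the cross term $u^{2q-2}|\nabla u|^{\beta-1} \langle \nabla |\nabla u|,\nabla u^2\rangle$ vanishes. What remains are exactly the two surviving terms in the stated formula, after noting $u^{2q-2} = u^{-n}$.

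For the second identity, I would set $q = 0$. Then $u^{2q} = 1$ and the leading term vanishes because of the factor $2q$, while $u^{2q-2} = u^{-2}$ and the cross-term coefficient $\beta(2q+n-2)$ reduces to $\beta(n-2)$. Reading off the remaining three terms of Proposition \ref{p:dvs} gives the claimed expression.

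Since this is entirely a bookkeeping step, there is no real obstacle; the only thing to double-check is that the exponents $u^{2q-2}$ collapse correctly in each case ($u^{-n}$ versus $u^{-2}$) and that the $\tilde\beta$ coefficient in the $|B(\nn)|^2$ term carries through unchanged from Proposition \ref{p:dvs} (which it does, since $\tilde\beta$ depends only on $n$ and $\beta$, not on $q$).
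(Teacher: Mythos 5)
Your proposal is correct and is exactly the paper's argument: the corollary is obtained by substituting $2q=2-n$ and $q=0$ into Proposition \ref{p:dvs}, with the cancellations $2q+n-2=0$ (resp.\ $2q=0$) killing the extra terms just as you describe. Nothing further is needed.
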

 
 \begin{proof}
Take $2q=2-n$ and $q=0$ in Proposition \ref{p:dvs}. 
 \end{proof}
 
 \section{The monotonicity formulas}		\label{s:mono}
 
 In this section, we use the formulas from the previous section to prove three one-parameter families of monotonicity formulas generalizing the three formulas from \cite{C}.  The parameter corresponds to the power of $|\nabla u|$ in the integrand and monotonicity holds as long as the parameter is above a certain critical value.  We will see in \cite{CM2} that these formulas sit in two-parameter families where the functions   then   satisfy the nonlinear $p$-Laplace equation.

 Throughout this section, $u>0$   satisfies $\Delta u^2 = 2n \, |\nabla u|^2$ as in the previous section and, in addition, is proper and is normalized so that 
 \begin{align}
 	\lim_{r\to 0}  \, \frac{u}{r} = 1 \, , 
 \end{align}
 where $r$ is the distance to a fixed point.
 
 Recall that the scale-invariant quantity $A_{\beta}$ is given by
 \begin{align}
 	A_{\beta} (r) &= r^{1-n} \, \int_{  u = r } |\nabla u|^{1+\beta} \, .
\end{align}
We also define a second family of scale-invariant quantities $V_{\beta}$ by
 \begin{align}
	V_{\beta} (r)= r^{2-n} \, \int_0^r \, \int_{ u = s} \frac{ |\nabla u|^{1+\beta}}{u^2} \, ds =r^{2-n} \, \int_{ u \leq r} \frac{ |\nabla u|^{2+\beta}}{u^2}\, ,
\end{align}
where the second equality is the
 co-area formula.  
Differentiating $V_{\beta}$ gives
\begin{align}	\label{l:V1beta} 
	r\, V_{\beta} ' (r) =  (2-n)  \, V_{\beta} (r) +  A_{\beta} (r) \, .
\end{align} 

The following simple lemma shows that both $A_{\beta}$ and $V_{\beta}$ are uniformly bounded:
 
 \begin{Lem}   \label{l:bounded}
 If $M$ is nonparabolic with nonnegative Ricci curvature, $G$ is a Green's function, 
  and $u=G^{ \frac{1}{2-n} }$, then for all $r$
   \begin{align}
 	    A_{\beta}(r) &\leq  \Vol (\partial B_1(0)) \leq 
	    r^{1-n} \, \Vol (u=r) \, ,\\
	    V_{\beta} (r) &\leq  \frac{\Vol (\partial B_1(0))}{n-2}\, .
	     \end{align}
	     In particular, both are uniformly bounded by their Euclidean values.
 \end{Lem}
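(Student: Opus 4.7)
The proof proposal rests on two inputs: the fact that $u^{2-n} = G$ is harmonic (so $\Delta u^{2-n}=0$ by Lemma \ref{l:}) together with the sharp gradient estimate $|\nabla u| \leq 1$ established in \cite{C}. The latter is the nonroutine ingredient; everything else reduces to the divergence theorem plus the co-area formula.

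First I would compute $A_0(r) = r^{1-n} \int_{u=r} |\nabla u|$ exactly. Since $\nabla u^{2-n} = (2-n)\, u^{1-n}\,\nabla u$ and $\Delta u^{2-n} = 0$ on $\{\epsilon \leq u \leq r\}$, the divergence theorem gives
\begin{equation*}
0 = (2-n)\, r^{1-n}\int_{u=r} |\nabla u| \; - \; (2-n)\, \epsilon^{1-n}\int_{u=\epsilon} |\nabla u| \, ,
\end{equation*}
so $r^{1-n}\int_{u=r}|\nabla u|$ is independent of $r$. As $\epsilon \to 0$, the normalization $u/r \to 1$ together with smoothness forces $|\nabla u|\to 1$ and $\Vol(u=\epsilon)/\epsilon^{n-1}\to \Vol(\partial B_1(0))$, giving $A_0(r)\equiv \Vol(\partial B_1(0))$.

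Next I would invoke the gradient estimate $|\nabla u|\leq 1$ from \cite{C}. Since $\beta \geq 0$ under our standing assumption, $|\nabla u|^{1+\beta}\leq |\nabla u|$ pointwise, so
\begin{equation*}
A_\beta(r) \;=\; r^{1-n}\int_{u=r} |\nabla u|^{1+\beta} \;\leq\; r^{1-n}\int_{u=r} |\nabla u| \;=\; A_0(r) \;=\; \Vol(\partial B_1(0)) \, .
\end{equation*}
The reverse comparison in the first displayed bound is equally immediate: again since $|\nabla u|\leq 1$,
\begin{equation*}
\Vol(\partial B_1(0)) \;=\; r^{1-n}\int_{u=r} |\nabla u| \;\leq\; r^{1-n}\int_{u=r} 1 \;=\; r^{1-n}\,\Vol(u=r) \, .
\end{equation*}

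Finally, for the bound on $V_\beta$, the co-area formula rewrites
\begin{equation*}
V_\beta(r) \;=\; r^{2-n}\int_0^r \int_{u=s} \frac{|\nabla u|^{1+\beta}}{s^2}\, ds \;=\; r^{2-n}\int_0^r s^{n-3}\, A_\beta(s)\, ds \, ,
\end{equation*}
and the $A_\beta$-bound just established yields $V_\beta(r) \leq r^{2-n}\,\Vol(\partial B_1(0))\,\int_0^r s^{n-3}\, ds = \Vol(\partial B_1(0))/(n-2)$. The only step requiring substantive input beyond bookkeeping is the appeal to $|\nabla u|\leq 1$; absent that estimate the computation of $A_0$ is still exact, but there is no mechanism to compare $A_\beta$ to $A_0$ for $\beta>0$.
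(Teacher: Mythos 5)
Your proof is correct and takes essentially the same approach as the paper: the paper's own proof simply quotes from \cite{C} the two facts $|\nabla u|\leq 1$ and $r^{1-n}\int_{u=r}|\nabla u| = \Vol(\partial B_1(0))$ and notes that both bounds follow easily. You merely spell out what the paper leaves implicit — the divergence-theorem derivation of the flux identity from $\Delta u^{2-n}=0$ with the constant identified via the Green's function asymptotics at the pole, the pointwise comparison $|\nabla u|^{1+\beta}\leq |\nabla u|$, and the co-area integration giving $V_{\beta}(r)\leq \Vol(\partial B_1(0))/(n-2)$.
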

 
 \begin{proof}
 As in  \cite{C}, we have that
 $
 |\nabla u|\leq 1$ and
 \begin{align}
 r^{1-n} \, \int_{ u = r } |\nabla u|&=\Vol (\partial B_1(0))\, . 
 \end{align}
 Both claims   follow easily from this. 
  \end{proof}

 \subsection{The  first and  second monotonicity formulas}
 
 We will now state and prove the first two monotonicity formulas.  Recall that the constant $\tilde{\beta} \equiv 1 + (\beta -1) (n-1)$ is nonnegative since $n> 2$ and $\beta \geq \frac{n-2}{n-1}$.

\vskip2mm
The next theorem gives the first monotonicity formula.

 \begin{Thm}	\label{t:mono2}
 We have 
  \begin{align}
 	   &\left( A_{\beta}
	     -2\, 
	       (  n-2)  \, V_{\beta} \right)'(r) = 
	      \frac{\beta}{r^{n-1}} \, \int_{0}^r \int_{u = s } |\nabla u|^{\beta-1} \left( \left| \II_0 \right|^2 + \Ric (\nn , \nn)
	   \right) \, ds  \notag \\
	   &\qquad + 
	   \frac{ \beta}{4(n-1)} \,  r^{1-n} \int_{0}^r \int_{u=s}
	 u^{-2} \, |\nabla u|^{\beta-3} 
	 \, \left( \tilde{\beta} \, \left| B(\nn )\right|^2 + (n-2) \, \left| B(\nn)^T \right|^2
	\right) \, ds  \, 
 \end{align}
 \end{Thm}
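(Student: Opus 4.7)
The plan is to start from Corollary~\ref{c:optimize} with $q=0$ (the formula for $\Delta |\nabla u|^\beta$) and integrate it over the sublevel set $\{u\leq r\}$. Two of the three terms in that identity are already, up to the co-area formula, the two integrals appearing on the right of the theorem, so the argument is essentially bookkeeping plus the conversion of a single ``cross'' term into $2(n-2)V_\beta'$. First I would identify $A_\beta'(r)$ with an interior Laplacian integral: writing $r^{n-1}A_\beta(r) = \int_{u=r}\langle|\nabla u|^\beta \nabla u,\nn\rangle\,d\sigma$ and applying the divergence theorem on $\{u\leq r\}$, one obtains $r^{n-1}A_\beta(r)=\int_{u\leq r}\dv(|\nabla u|^\beta\nabla u)$. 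Differentiating in $r$ by the co-area formula and substituting $\Delta u = (n-1)|\nabla u|^2/u$, the ``easy'' $(n-1)r^{n-2}A_\beta(r)$ pieces cancel on both sides and leave
\[
r^{n-1}A_\beta'(r) = \int_{u\leq r}\Delta|\nabla u|^\beta.
\]

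Plugging in Corollary~\ref{c:optimize} produces the two desired integrands plus a cross term $\beta(n-2)u^{-2}|\nabla u|^{\beta-1}\langle\nabla|\nabla u|,\nabla u^2\rangle$, which I need to recognize as $2(n-2)r^{n-1}V_\beta'(r)$ after integrating. For this, rewrite $\beta|\nabla u|^{\beta-1}\langle\nabla|\nabla u|,\nabla u^2\rangle = \langle\nabla|\nabla u|^\beta,\nabla u^2\rangle$ and apply the product rule:
\[
(n-2)u^{-2}\langle\nabla|\nabla u|^\beta,\nabla u^2\rangle = (n-2)\dv\!\left(u^{-2}|\nabla u|^\beta\nabla u^2\right) - (n-2)|\nabla u|^\beta\dv(u^{-2}\nabla u^2).
\]
A direct computation using $\Delta u^2 = 2n|\nabla u|^2$ and $\nabla u^{-2}=-2u^{-3}\nabla u$ gives $\dv(u^{-2}\nabla u^2)=2(n-2)u^{-2}|\nabla u|^2$. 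Applying the divergence theorem and the definitions of $A_\beta, V_\beta$, the first piece contributes $2(n-2)r^{n-2}A_\beta(r)$ and the second $-2(n-2)^2r^{n-2}V_\beta(r)$, which total $2(n-2)r^{n-2}[A_\beta(r)-(n-2)V_\beta(r)]=2(n-2)r^{n-1}V_\beta'(r)$ by \eqref{l:V1beta}. Subtracting $2(n-2)V_\beta'(r)$ from both sides of the identity in the previous paragraph, and rewriting the remaining $\int_{u\leq r}(\cdot)$'s as $\int_0^r\int_{u=s}(\cdot)/|\nabla u|\,ds$ via the co-area formula, gives the stated formula (the extra $|\nabla u|^{-1}$ from co-area explains why the exponents in the theorem statement drop by one compared with those in Corollary~\ref{c:optimize}).

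The main obstacle is justifying the two divergence-theorem applications near the pole $x_0$ of $G$, where $u\to 0$ and $\nabla|\nabla u|$ is \emph{a priori} singular. Using the normalization $\lim_{r\to 0}u/r=1$ and the standard Green's function asymptotics (so $u\sim\dist(\cdot,x_0)$ and $|\nabla u|\to 1$ near $x_0$), one checks that the inner-boundary fluxes through $\partial B_\epsilon(x_0)$ of $|\nabla u|^\beta\nabla u$ and of $u^{-2}|\nabla u|^\beta\nabla u^2$ are each $O(\epsilon^{n-1})\cdot O(\epsilon^{-1})=O(\epsilon^{n-2})=o(1)$ as $\epsilon\to 0$. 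This is the same type of estimate underlying Lemma~\ref{l:bounded}, so I would either quote it from \cite{C} or dispatch it in a short lemma before carrying out the divergence-theorem steps above.
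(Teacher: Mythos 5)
Your proposal is correct and follows essentially the same route as the paper's proof: reduce to $r^{n-1}A_{\beta}'(r)=\int_{u\le r}\Delta|\nabla u|^{\beta}$, insert the $q=0$ identity of Corollary \ref{c:optimize}, convert the cross term into $2(n-2)\,r^{n-1}V_{\beta}'(r)$ by integrating the divergence of $u^{-2}|\nabla u|^{\beta}\nabla u^{2}$ (your product-rule identity is exactly the first claim of Lemma \ref{l:divforms} with $p=-1$, $\alpha=\beta$) together with \eqr{l:V1beta}, and finish with the coarea formula. The remaining differences are cosmetic: your derivation of $A_{\beta}'(r)=r^{1-n}\int_{u=r}\langle\nabla|\nabla u|^{\beta},\nn\rangle$ and your treatment of the inner boundary terms at the pole correspond to what the paper cites to \cite{CM3}, \cite{CM4} and \cite{C}.
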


  \begin{proof}
 Differentiating $A_{\beta}$ as in \cite{CM3}, \cite{CM4} gives 
  \begin{align}	\label{e:abetap}
 	 A_{\beta}'(r) &=     r^{1-n} \, \int_{ u = r } \langle \nabla   |\nabla u|^{\beta} , \nn \rangle \, .
\end{align}
Applying Stokes' theorem, noting that the interior boundary integral goes to zero (using the asymptotics of
the Green's function at the pole; cf. \cite{C}), and  
then using Corollary \ref{c:optimize} gives
  \begin{align}
 	r^{n-1} \, A_{\beta}'(r) &=      
	\int_{ u \leq r } \Delta  |\nabla u|^{\beta} \notag \\
	 &=     \int_{ u \leq r  } \left( \beta \,  
	  |\nabla u|^{\beta} \left( \left| \II_0 \right|^2 + \Ric (\nn , \nn)    \right) +
	  \beta \, (n-2) \,    u^{-2}  \, |\nabla u|^{\beta - 1} \langle \nabla |\nabla u| , \nabla u^2 \rangle  \right) \\
	  &+ \frac{ \beta}{4(n-1)} \,   \int_{u \leq r}  
	 u^{-2} \, |\nabla u|^{\beta-2} 
	 \, \left( \tilde{\beta}\, \left| B(\nn )\right|^2 + (n-2) \, \left| B(\nn)^T \right|^2
	\right) \, . \notag 
 \end{align}
 We apply the coarea formula on every term except the last term on the second line.
 For this,  we use the first claim in 
Lemma \ref{l:divforms} to get
 \begin{align}
 	\dv \, \left(   |\nabla u|^{\beta}  \, \frac{\nabla u^2}{u^2}  \right) =
	 \frac{\beta |\nabla u|^{\beta-1}}{u^2} \, \langle \nabla |\nabla u| , \nabla u^2 \rangle + (2n-4) \, 
	\frac{  \left| \nabla u \right|^{2+\beta}}{u^2}  \, .
 \end{align}
 Since $n>2$, the interior boundary integral goes to zero and
  the divergence theorem gives
 \begin{align}
   \int_{ u \leq r  }   \frac{\beta |\nabla u|^{\beta - 1}}{u^2} \, \langle \nabla |\nabla u| , \nabla u^2 \rangle   &=
    \frac{2}{r} \, \int_{ u = r }  |\nabla u|^{1+\beta}   -
   (2n-4) \,  \int_{ u \leq r }  \frac{  \left| \nabla u \right|^{2+\beta}}{u^2}  \notag \\
   & =  2 \, r^{n-1} \, \left( \frac{ A_{\beta} (r) + (2-n) \,   V_{\beta} (r) }{r} \right) =
   2 \, r^{n-1} \, V_{\beta}'(r) \, ,
 \end{align}
 where the last equality used \eqr{l:V1beta}.
 Multiplying by $(n-2) \, r^{1-n}$ and 
 putting this back into the formula for $A_{\beta}'(r)$ gives
 the theorem.
 \end{proof}
 
  The case $\beta = 2$ in Theorem \ref{t:mono2} is the first monotonicity formula in \cite{C}.  We record the case $\beta = 1$ below
 separately as it seems to be of particular significance.    
 
  \begin{Cor}	\label{c:mono1}
 We have
 \begin{align}
 	 \left( A_1 - 2(n-2)\, V_1 \right)' (r)  &\geq  
	r^{1-n} \, \int_{0}^r \int_{u = s} \left( \left| \II_0 \right|^2 + \Ric (\nn , \nn)
	   \right) \, ds 
	 \, .
 \end{align}
 \end{Cor}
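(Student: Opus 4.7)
The plan is to specialize Theorem \ref{t:mono2} to the case $\beta = 1$ and then discard a nonnegative remainder term.

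First I would verify that $\beta = 1$ is admissible, i.e.\ that $1 \geq \frac{n-2}{n-1}$, which is immediate for $n > 2$. The associated constant from \eqr{e:tildeb} is
\begin{equation}
	\tilde{\beta} = 1 + (1-1)(n-1) = 1 \geq 0,
\end{equation}
so Theorem \ref{t:mono2} applies with no sign issues. Substituting $\beta = 1$ into that identity, the exponent $\beta - 1 = 0$ kills the weight on the first integrand, and it collapses to exactly $r^{1-n} \int_0^r \int_{u=s} \bigl(|\II_0|^2 + \Ric(\nn,\nn)\bigr) \, ds$, which is the desired right-hand side of the corollary.

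Next, I would observe that the second integral appearing in Theorem \ref{t:mono2}, which with $\beta=1$ reads
\begin{equation}
	\frac{1}{4(n-1)} \, r^{1-n} \int_0^r \int_{u = s} u^{-2} \, |\nabla u|^{-2} \left( \left| B(\nn)\right|^2 + (n-2) \, \left|B(\nn)^T\right|^2 \right) \, ds ,
\end{equation}
is pointwise nonnegative since $n > 2$, $\tilde{\beta} = 1 > 0$, and the integrand is a sum of squares weighted by positive factors. Dropping it preserves the identity as an inequality and yields the corollary. No further computation is needed, and because the two sides of the identity in Theorem \ref{t:mono2} were already established via Stokes' theorem together with the Green's-function asymptotics at the pole, there is no integrability issue to revisit here.

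The only potential subtlety — and it is not really an obstacle — is checking that $\beta = 1$ indeed falls within the regime where the coarea-formula step and the vanishing of the boundary term near the pole used in the proof of Theorem \ref{t:mono2} remain valid; but this is already built into the assumption $\beta \geq \frac{n-2}{n-1}$ and the nonparabolicity hypothesis, so the corollary follows directly.
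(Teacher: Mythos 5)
Your proposal is correct and is exactly the paper's intended argument: the corollary is simply the case $\beta = 1$ of Theorem \ref{t:mono2} (so $\tilde{\beta}=1$ and the weight $|\nabla u|^{\beta-1}$ becomes $1$), with the remaining manifestly nonnegative $B$-term on the right-hand side dropped to turn the identity into the stated inequality. Your checks that $1 \geq \frac{n-2}{n-1}$ and that the boundary/coarea steps are already handled in the proof of Theorem \ref{t:mono2} are fine and complete the argument.
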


   Our second monotonicity formula follows.

 \begin{Thm}	\label{t:mono3}
 If  $r_1 < r_2$, then 
   \begin{align}
   	&(2-n) \,  A_{\beta} (r_2)  + r_2 \,  A_{ \beta} '(r_2)
	- (2-n) \,  A_{ \beta} (r_1)  - r_1 \,  A_{ \beta} '(r_1) \notag \\
	&\qquad  =   \beta \,   \int_{r_1 \leq u \leq r_2} 
	 u^{2-n} \, |\nabla u|^{\beta} \left( \left| \II_0 \right|^2 + \Ric (\nn , \nn)    \right) \notag \\
	 &\qquad + \frac{ \beta}{4(n-1)} \,  \int_{r_1 \leq u \leq r_2}
	 u^{-n} \, |\nabla u|^{\beta-2} 
	 \, \left(   \tilde{\beta} \, \left| B(\nn )\right|^2 + (n-2) \, \left| B(\nn)^T \right|^2
	\right) \, .
\end{align}
 Moreover, we have
\begin{align}
	  &  \left[r^{2-n} \, \left( A_{ \beta} (r) -  \Vol (\partial B_1(0))\right)  \right]' 
	  = \beta \,  r^{1-n} \,  \int_{u \leq r} 
	 u^{2-n} \, |\nabla u|^{\beta} \left( \left| \II_0 \right|^2 + \Ric (\nn , \nn)    \right) \notag \\
	 &\qquad + \frac{ \beta}{4(n-1)} \,  \int_{r_1 \leq u \leq r_2}
	 u^{-n} \, |\nabla u|^{\beta-2} 
	 \, \left(   \tilde{\beta} \, \left| B(\nn )\right|^2 + (n-2) \, \left| B(\nn)^T \right|^2
	\right) \, ,
  \end{align}
where $B_1(0)$ is the Euclidean ball of radius one.
 \end{Thm}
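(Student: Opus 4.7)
The plan is to recognize the right-hand side of the first formula as $\int_{r_1 \leq u \leq r_2} \Delta(u^{2-n}\,|\nabla u|^{\beta})$, and then convert this to boundary integrals via Stokes' theorem. Indeed, the first identity in Corollary \ref{c:optimize} gives exactly
\begin{equation*}
\Delta\bigl(u^{2-n}\,|\nabla u|^{\beta}\bigr) = \beta\,u^{2-n}\,|\nabla u|^{\beta}\bigl(|\II_0|^2 + \Ric(\nn,\nn)\bigr) + \frac{\beta}{4(n-1)}\,u^{-n}\,|\nabla u|^{\beta-2}\bigl(\tilde{\beta}\,|B(\nn)|^2 + (n-2)\,|B(\nn)^T|^2\bigr),
\end{equation*}
so integrating over the annulus $\{r_1 \leq u \leq r_2\}$ reproduces the RHS.

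Next I would compute the boundary integrals. Using the product rule,
\begin{equation*}
\nabla(u^{2-n}\,|\nabla u|^{\beta}) = (2-n)\,u^{1-n}\,|\nabla u|^{\beta}\,\nabla u + u^{2-n}\,\nabla |\nabla u|^{\beta},
\end{equation*}
and on the level set $\{u=r\}$, pairing with $\nn = \nabla u/|\nabla u|$ and using $u=r$ there gives
\begin{equation*}
\int_{u=r}\langle \nabla(u^{2-n}\,|\nabla u|^{\beta}),\nn\rangle = (2-n)\,r^{1-n}\int_{u=r}|\nabla u|^{1+\beta} + r^{2-n}\int_{u=r}\langle \nabla |\nabla u|^{\beta},\nn\rangle.
\end{equation*}
The first piece is $(2-n)\,A_{\beta}(r)$ by the definition of $A_{\beta}$, and the second piece equals $r^{2-n}\cdot r^{n-1}\,A_{\beta}'(r) = r\,A_{\beta}'(r)$ by \eqr{e:abetap}. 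Applying Stokes on $\{r_1 \leq u \leq r_2\}$ then yields the first claim, provided the divergence theorem is legitimate on this annulus; since $u$ is proper and smooth away from the pole (and the pole lies outside the annulus when $r_1 > 0$), this is unproblematic.

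For the second formula, the goal is to let $r_1 \to 0$. Writing $F(r) = r^{2-n}\bigl(A_{\beta}(r) - \Vol(\partial B_1(0))\bigr)$, a direct differentiation gives $F'(r) = r^{1-n}\bigl[(2-n)(A_{\beta}(r) - \Vol(\partial B_1(0))) + r\,A_{\beta}'(r)\bigr]$, so dividing the first claim by $r_2^{n-1}$, specializing $r_2=r$, and sending $r_1\to 0$ will deliver precisely $F'(r)$ on the left (with the integral $\{u\leq r\}$ on the right — note that the second integral in the stated RHS must read $\{u\leq r\}$, as this is dictated by the $r_1\to 0$ limit of the first formula). The main obstacle, and only real delicacy in the argument, is showing that the interior boundary contribution $(2-n)A_{\beta}(r_1) + r_1 A_{\beta}'(r_1)$ converges to $(2-n)\Vol(\partial B_1(0))$. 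For this I would invoke the normalization $u/r\to 1$ together with the fact (used already in \cite{C}, and noted for example in Lemma \ref{l:bounded}) that $|\nabla u|\to 1$ near the pole, which forces $A_{\beta}(r_1)\to \Vol(\partial B_1(0))$; and then one needs the derivative estimate $r_1\,A_{\beta}'(r_1)\to 0$, which follows from the same Euclidean asymptotics of $u$ near the pole. Once these limiting contributions are in place, rearranging gives the second monotonicity identity.
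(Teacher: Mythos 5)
Your proof of the first identity is essentially the paper's: you rewrite the right-hand side as $\int_{r_1 \leq u \leq r_2} \Delta\left( u^{2-n} |\nabla u|^{\beta}\right)$ via Corollary \ref{c:optimize} and identify the flux through $\{u=r\}$ as $(2-n)A_{\beta}(r) + r\,A_{\beta}'(r)$ using \eqr{e:abetap}; the paper does the same thing by differentiating $f(r) = r^{2-n}A_{\beta}(r)$ and setting $g(r)=r^{n-1}f'(r)$. You also correctly note that the second integral in the displayed second formula should be over $\{u\leq r\}$.

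The gap is in the second claim, at exactly the point you yourself flag as the only delicacy. You assert that $r_1\,A_{\beta}'(r_1)\to 0$ ``follows from the same Euclidean asymptotics of $u$ near the pole,'' but those asymptotics ($u/r\to 1$, $|\nabla u|\leq 1$, $|\nabla u|\to 1$) control $u$ only to first order, whereas $A_{\beta}'(r) = r^{1-n}\int_{u=r}\langle \nabla |\nabla u|^{\beta}, \nn\rangle$ involves $\nabla|\nabla u|$, i.e.\ second derivatives of $u$ (equivalently $B(\nn)$). First-order asymptotics of the Green's function at the pole do not give pointwise decay of this flux; to obtain the full limit $r_1\,A_{\beta}'(r_1)\to 0$ you would need Hessian-level asymptotics of $G$ at the pole, which you do not establish (and the paper never needs). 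The paper circumvents this precisely: it invokes Lemma 2.8 of \cite{C}, which produces only a \emph{sequence} $r_i\to 0$ along which the combined boundary term $g(r_i)=(2-n)A_{\beta}(r_i)+r_i\,A_{\beta}'(r_i)$ tends to $(2-n)\Vol(\partial B_1(0))$, and a sequence suffices because the first identity expresses $g(r)-g(r_i)$ as a bulk integral whose limit can then be taken. If you want to stay self-contained, you could manufacture such a sequence from the convergence of $A_{\beta}(r)$ alone (for instance by a mean-value argument applied to $f(r)=r^{2-n}A_{\beta}(r)$), since knowing the asymptotics of a function never controls its derivative at every scale, only along a sequence. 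As written, the step $r_1\,A_{\beta}'(r_1)\to 0$ is unsupported, so your derivation of the second formula is incomplete.
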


  \begin{proof} 
  To keep notation simple, within this proof define $f(r)$ and $g(r)$  by
  \begin{align}
  	f(r) &\equiv r^{2-n} \, A_{\beta}(r) = r^{1-n} \int_{u=r} u^{2-n} \, |\nabla u|^{1+ \beta} \, , \\
	g(r) &\equiv r^{1-n} \, f'(r) =
	 r^{n-1} \, \left(r^{2-n} \, A_{ \beta} (r) \right)' =
  (2-n) \,  A_{ \beta} (r)  + r \,  A_{\beta} '(r) \, .
 \end{align}
  Using the second expression for $f(r)$, we compute its derivative (cf. \cite{CM3})
  \begin{align}
  	f'(r) = r^{1-n} \, \int_{u=r} \langle \nabla  \left( u^{2-n} \, |\nabla u|^{\beta} \right) , \nn \rangle \, .
 \end{align}
  Thus, for $r_1 < r_2$ the divergence theorem and 
  Corollary \ref{c:optimize} give
  \begin{align}	\label{e:gofr}
  	g (r_2) - g(r_1) &=
	r_2^{n-1} f' (r_2) -  r_1^{n-1} f'(r_1) = \int_{r_1 \leq u \leq r_2} \Delta \left( u^{2-n} \, |\nabla u|^{\beta} \right) 
		\notag \\
		&=   \beta \,   \int_{r_1 \leq u \leq r_2} 
	 u^{2-n} \, |\nabla u|^{\beta} \left( \left| \II_0 \right|^2 + \Ric (\nn , \nn)    \right) \\
	 &  + \frac{ \beta}{4(n-1)} \,  \int_{r_1 \leq u \leq r_2}
	 u^{-n} \, |\nabla u|^{\beta-2} 
	 \, \left(   \tilde{\beta} \, \left| B(\nn )\right|^2 + (n-2) \, \left| B(\nn)^T \right|^2
	\right) \notag \, ,
  \end{align}
 giving the first claim.
  
It follows from lemma $2.8$ of \cite{C} that there is a sequence $r_i \to 0$ so that
\begin{align}
	g(r_i) \to (2-n) \, \Vol (\partial B_1(0)) \, .
\end{align}
Putting this back into \eqr{e:gofr} and taking the limit as $r_i \to 0$ gives for $r > 0$ that
  \begin{align}	\label{e:gofr2q}
  	g(r) - (2-n)\, \Vol (\partial B_1(0))  =   \beta \,   \int_{u \leq r} 
	 u^{2-n} \, |\nabla u|^{\beta} \left( \left| \II_0 \right|^2 + \Ric (\nn , \nn)    \right) \notag \\
	 + \frac{ \beta}{4(n-1)} \,  \int_{  u \leq r}
	 u^{-n} \, |\nabla u|^{\beta-2} 
	 \, \left(   \tilde{\beta} \, \left| B(\nn )\right|^2 + (n-2) \, \left| B(\nn)^T \right|^2
	\right)\, .
  \end{align}
 The second claim follows from this since
 \begin{align}
 	 r^{n-1} \, \left(r^{2-n} \, \left( A_{ \beta} (r) -  \Vol (\partial B_1(0))\right)  \right)' 
	 = g(r) - (2-n)\, \Vol (\partial B_1(0))     \, .
 \end{align}
  \end{proof}
  
  \subsection{The third monotonicity formula}

   The following formula is the third monotonicity formula for $A_{\beta}$.  We will see that this leads to the monotonicity of $A_{\beta}$ itself.
  
  \begin{Thm}	\label{t:A2mono3}
If   $r_1 < r_2$, then
\begin{align}
r_2^{3-n} \, A_{\beta} '(r_2) &- r_1^{3-n} \, A_{\beta} '(r_1) 
= \beta \, \int_{ r_1 \leq u \leq r_2 } u^{4-2n} \,   |\nabla u|^{\beta} \, 
	 \left(           \left| \II_0 \right|^2  
	 +     \Ric (\nn , \nn) 
	  \right) \notag  \\
	  &+ \frac{\beta}{4(n-1)} \, \int_{ r_1 \leq u \leq r_2 } u^{2-2n}   \,  |\nabla u|^{\beta -2}   \left( \tilde{\beta} \,
	\left| B(\nn) \right|^2 + (n-2) \, \left| B(\nn)^T \right|^2 
	\right)  \, .
 \end{align}
\end{Thm}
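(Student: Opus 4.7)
The plan is to recognize this as a ``radial divergence-theorem'' identity for the quantity $r^{3-n}\,A_{\beta}'(r)$, where the exponent $3-n$ is chosen precisely so that a cross term in the divergence formula cancels.

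First I would start from the boundary expression for $A_{\beta}'(r)$ used in the proof of Theorem \ref{t:mono2}, namely \eqref{e:abetap}:
\begin{align*}
    A_{\beta}'(r) = r^{1-n}\int_{u=r}\langle \nabla |\nabla u|^{\beta},\nn\rangle \, .
\end{align*}
Since $u\equiv r$ on the level set, I can absorb the extra power of $r$ inside the integrand by writing
\begin{align*}
    r^{3-n}\,A_{\beta}'(r) = \int_{u=r}\langle u^{4-2n}\,\nabla |\nabla u|^{\beta},\nn\rangle \, ,
\end{align*}
and then apply the divergence theorem on the annular region $\{r_1\leq u\leq r_2\}$ to get
\begin{align*}
    r_2^{3-n}A_{\beta}'(r_2) - r_1^{3-n}A_{\beta}'(r_1) = \int_{r_1\leq u\leq r_2}\dv\bigl(u^{4-2n}\,\nabla |\nabla u|^{\beta}\bigr) \, .
\end{align*}

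Next, the task reduces to computing the integrand. Writing $\nabla |\nabla u|^{\beta} = \beta\,|\nabla u|^{\beta-1}\nabla |\nabla u|$, I would apply the second formula in Lemma \ref{l:divforms} with parameters $p=2-n$ and $\alpha = \beta-1$. The key observation (which is the heart of the argument) is that with this choice of $p$ one has $p+n-2=0$, so the term
\begin{align*}
    u^{2p-2}\,(p+n-2)\,|\nabla u|^{\alpha}\,\langle \nabla u^2,\nabla |\nabla u|\rangle
\end{align*}
disappears entirely. This is exactly the mechanism by which $r^{3-n}$ was chosen: it kills the otherwise troublesome cross term with $\langle \nabla |\nabla u|,\nabla u^2\rangle$ that appeared in Theorems \ref{t:mono2} and \ref{t:mono3}.

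What remains from Lemma \ref{l:divforms} is precisely
\begin{align*}
    \beta\,u^{4-2n}\,|\nabla u|^{\beta}\left(|\II_0|^2+\Ric(\nn,\nn)\right) + \frac{\beta\,u^{2-2n}\,|\nabla u|^{\beta-2}}{4(n-1)}\left(\tilde{\beta}\,|B(\nn)|^2+(n-2)\,|B(\nn)^T|^2\right) \, ,
\end{align*}
where I have used that the $\alpha$ in the lemma equals $\beta-1$, so $1+\alpha(n-1) = 1+(\beta-1)(n-1) = \tilde{\beta}$, matching the definition in \eqref{e:tildeb}. Integrating over $\{r_1\leq u\leq r_2\}$ gives the stated identity. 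There is no real obstacle here since $r_1>0$ keeps us away from the singularity of $u$ at the pole, so no boundary contribution from the interior needs to be handled (unlike in the proof of Theorem \ref{t:mono3} where one sent $r_1\to 0$). The only thing to double-check is the arithmetic matching of $\tilde{\beta}$ and the $u$-exponents, which is straightforward.
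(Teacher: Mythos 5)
Your proposal is correct and follows essentially the same route as the paper: the identity $r^{3-n}A_{\beta}'(r)=\beta\int_{u=r}\langle u^{4-2n}|\nabla u|^{\beta-1}\nabla|\nabla u|,\nn\rangle$ from \eqref{e:abetap}, the divergence theorem on $\{r_1\leq u\leq r_2\}$, and the second claim of Lemma \ref{l:divforms} with $p=2-n$, $\alpha=\beta-1$, where $p+n-2=0$ annihilates the cross term and $1+\alpha(n-1)=\tilde{\beta}$. No gaps.
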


\begin{proof} 
Using the formula \eqr{e:abetap} for $A_{\beta}'$, we  get
  \begin{align}	
 	r^{3-n} \, A_{\beta}'(r) &= r^{4-2n} \, \int_{ u = r } \langle \nabla   |\nabla u|^{\beta} , \nn \rangle  =
	 \beta \,  \int_{ u = r }  \langle u^{4-2n} \,  |\nabla u|^{\beta -1} \, \nabla   |\nabla u| , \nn \rangle \, ,
 \end{align}
 so that the divergence theorem gives 
  \begin{align}
 	r_2^{3-n} \, A_{\beta} '(r_2) - r_1^{3-n} \, A_{\beta} '(r_1)  = \beta \, \int_{ r_1 \leq u \leq r_2} \dv \, \left( u^{4-2n} \, |\nabla u|^{\beta -1} \nabla |\nabla u| \right)	   \, .
 \end{align}
 The theorem   follows from this since
the
second claim in Lemma \ref{l:divforms} with $\alpha = \beta - 1$ and $p=2-n$ gives
 \begin{align}	\label{e:thirdcl}
	&\dv \, \left( u^{4-2n} \, |\nabla u|^{\beta -1} \nabla |\nabla u| \right) =   
	u^{4-2n}   \,  |\nabla u|^{\beta}    \left(    \left| \II_0 \right|^2  + \Ric (\nn , \nn) \right) \notag \\
	&\qquad + \frac{1}{4(n-1)} \, u^{2-2n}   \,  |\nabla u|^{\beta -2}   \left( \tilde{\beta} \,
	\left| B(\nn) \right|^2 + (n-2) \, \left| B(\nn)^T \right|^2 
	\right) \, 
	  \, .
 \end{align}
\end{proof}

\subsection{Monotonicity of $A_{\beta}$}

Finally, we turn to the monotonicity of $A_{\beta}$.  
For this, we will focus on infinity, instead of the point singularity of the Green's function.

\begin{proof}[Proof of Theorem \ref{t:A2mono}]
We show first that $A_{ \beta} '(r) \leq 0$.
 It is convenient to define $f(r)$ by 
 \begin{align}	 
 	f(r) \equiv  r^{3-n} \, A_{\beta}'(r)  \, .
 \end{align}
 Theorem \ref{t:A2mono3} gives that
$f(s) \geq f(r)$ whenever $s> r$ and, thus, that
 \begin{equation}
 	A_{\beta}'(s) =  s^{n-3} \, f(s) \geq   s^{n-3} \, f(r)   \, .
 \end{equation}
 Integrating this from $r$ to $R$ would give
 \begin{align}
 	A_{\beta} (R) - A_{\beta} (r) \geq   f(r) \,  \int_r^R  s^{n-3}  \, ds \, .
 \end{align}
However, if $f(r) > 0$, then the right-hand side is not integrable as $R \to \infty$, but this contradicts 
 that $A_{{\beta}}(R)$ is uniformly bounded by Lemma \ref{l:bounded}.  This contradiction shows that we must always have $f(r) \leq 0$, completing the first step of the proof.
 
 The second step is to show that there is a sequence $r_j \to \infty$ with
 \begin{align}	\label{e:frj}
 	\left| f(r_j) \right| \to 0 \, .
 \end{align}
 However, this follows immediately from $A'$ having a sign and $|A|$ being bounded.
 
 Finally, the theorem follows from \eqr{e:frj}, Theorem  \ref{t:A2mono3} and the monotone convergence theorem.
 
\end{proof}
 
\begin{proof}[Proof of Corollary \ref{c:umb}]
By Theorem \ref{t:A2mono}, $A_1 (r)$ is non-increasing and
\begin{align}
	A_1(R/2) - A_1 (R) &\geq -\int_{ R/2}^R A_1'(r) \, dr \geq 
	\int_{ R/2}^R
	r^{n-3} \, \int_{ r \leq u  } u^{4-2n} \,    |\nabla u| 
	        \left| \II_0 \right|^2  \, dr \notag \\
	        &\geq   (2 R)^{2-n}  
	  \int_{ R \leq u \leq 2R }    |\nabla u| 
	        \left| \II_0 \right|^2 = 
	       (2R)^{2-n}  
	  \int_{ R}^{2R }   \int_{u=s}
	        \left| \II_0 \right|^2 \, ds \, ,
	     	\end{align}
	where the last equality is the coarea formula.

Since $A_{1}$ is nonnegative and non-increasing, it has a limit and, thus, 
\begin{align}
	\lim_{R \to \infty} \, \left\{ R^{2-n}  
	 \int_{ R}^{2R }   \int_{u=s}
	        \left| \II_0 \right|^2 \, ds \right\} = 0   \, . 
\end{align}
The corollary follows from this   since $\Vol (u=s) \geq s^{n-1} \, \Vol  (\partial B_1(0))$
by Lemma \ref{l:bounded}
\end{proof}

   \section{Examples}
     
     In this section we will give some simple examples that illustrate some of the results of the previous sections.
     
     \begin{Exa}
 Suppose that $M^2$ is a surface and $u:M\to \RR$ is a smooth function.  If $s\in \RR$ is a regular value of $u$, then the level set is umbilic since there is only one principal curvature.  
 On the other hand if $M$ is flat Euclidean space $\RR^n$ with $n\geq 3$ and $u:\RR^n\to \RR$ is a proper smooth function all of whose level sets are umbilic and $s$ is a regular value of $u$, then $u^{-1}(s)$ is a round sphere; see, for instance, \cite{S} Vol. IV, p. 11.    
 
 Obviously, there are many different functions on $\RR^n$ all of whose level sets are round spheres.   For instance, in addition to distance functions to fixed points, then the function
 \begin{align}
u(x)=\sqrt{|x|^2+x_1^2}-x_1\, 
\end{align}
has level sets that are round spheres
 \begin{align}
u^{-1}(s)=\{x\in \RR^n\, | \, |x-(s,0,\cdots,0)|^2=2\, s^2\}\,  .
\end{align}
This also shows that, even though all the level sets of $u$ are round spheres, the metric on $\RR^n$  may not be written as $dr^2+r^2\, d\theta^2$, where $\theta$ are coordinates on the level sets of $u$.
  \end{Exa}

  \begin{Exa}
 Let $M$ be the flat $4$-dimensional manifold $\RR^3\times \SS^1$ with coordinates $x=(x_1,x_2,x_3,\theta)$ and let $u_1$, $u_2:M\to \RR$ be nonnegative proper functions on $M$ given by
\begin{align}
u_1(x)&=\left(x_1^2+x_2^2+x_3^2\right)^{\frac{1}{4}}\, ,\\
u_2(x)&=\text{dist}_M(0,x)\, .
\end{align}
Then 
 \begin{align}
\liminf_{r\to \infty}\frac{r^{2}}{\Vol (u_1=\sqrt{r})}\int_{u_1=\sqrt{r}}|\II_0|^2&>0\, ,\\
\limsup_{r\to \infty}\frac{r^{2}}{\Vol (u_2=r)}\int_{u_2=r}|\II_0|^2&=0\, .
 \end{align}
  In particular, the level sets are asymptotically umbilic for $u_2$ but not for $u_1^2$.    Note that $u_2$ and $u_1^2$ are proportional to the distance function $r$ to $0$ when $r$ is large and thus, in particular,
 \begin{align}
\lim_{|x|\to \infty}\frac{u_1^2(x)}{u_2(x)}=1\, .
 \end{align}
 Note that both $u_1^{-2}$ and $u_2^{-2}$ (where smooth)
 are proper positive harmonic functions.  We saw   earlier   that, by Corollary \ref{c:umb}, for any proper positive harmonic function $u^{2-n}$ on a  manifold with nonnegative Ricci curvature and Euclidean volume growth
 \begin{align}
\liminf_{r\to \infty}\frac{r^{2}}{\Vol (u=r)}\int_{u=r}|\II_0|^2&=0\, .
 \end{align}
 \end{Exa}

\begin{Exa}
 Let $M$ be a smooth $n$-dimensional manifold with a warped product metric of the form
 \begin{align}
 dr^2+f^2(r,\theta)\,g\, ,
 \end{align}
 where $g$ is a metric on a smooth $(n-1)$-dimensional manifold $N$.  The second fundamental form of the level sets of $u=r$ is given by
 \begin{align}
 \II=\partial_r \log f\,g\, .
 \end{align}
 In particular, the level sets are umbilic.  However, if we also require that $\Delta r^2=2n\,|\nabla r|^2=2n$, then   $f=C\,r$ for some constant $C$ and thus $M$ with the metric is part of a metric cone.  To see this, note that
  \begin{align}
 \Delta r^2=2+2\,r\,\Delta r=2+2(n-1)\,r\,\partial_r \log f\, .
 \end{align}
 Thus if $\Delta r^2=2n\,|\nabla r|^2=2n$, then 
   \begin{align}
 1=r\,\partial_r \log f\, .
 \end{align}
 Or, in other words,
$
 f=C\,r\, .
$ \end{Exa}


\begin{thebibliography}{A}

\bibitem[A]{A}
F. Almgren, Jr., Q-valued functions minimizing Dirichlet's integral and the regularity of area minimizing rectifiable currents up to codimension two, preprint.

 








\bibitem[ChCT]{ChCT} 
J. Cheeger, T.H. Colding, and G. Tian, On the singularities of spaces with bounded Ricci curvature, GAFA, Vol. 12 (2002) 873--914.









\bibitem[C]{C}
T.H. Colding, New monotonicity formulas for Ricci curvature and applications; I, Acta Mathematica, in press, http://arxiv.org/abs/1111.4715.

 
 



\bibitem[CM1]{CM1}
T.H. Colding and W.P. Minicozzi II, 
Monotonicity and its analytic and geometric implications, preprint, arxiv.org/abs/1205.6768, PNAS (2012), in press.

     \bibitem[CM2]{CM2}
T.H. Colding and W.P. Minicozzi II, 
in preparation. 

\bibitem[CM3]{CM3}
T.H. Colding and W.P. Minicozzi II, 
Harmonic functions with polynomial growth, 
Jour. Diff. Geom. vol 45 (1997) 1--77.

\bibitem[CM4]{CM4}
T.H. Colding and W.P. Minicozzi II, 
Large scale behavior of kernels of Schr\"odinger operators. Amer. J. Math. 119 (1997), no. 6, 1355--1398.





\bibitem[GL]{GL}
N. Garofalo and F. H. Lin, Monotonicity properties of variational integrals, Ap weights and unique continuation, Indiana Univ. Math. J. 35 (1986), 245--267.











\bibitem[LY]{LY}
 P. Li and S.T. Yau,
 On the parabolic kernel of the Schr\"odinger operator,
 Acta Math. 156 (1986), no. 3-4, 153--201.
 

 










\bibitem[S]{S}  
M. Spivak, A comprehensive introduction to differential geometry. Vol. I-V. 2nd ed., Publish Perish, Inc., Berkeley, 1979.

\bibitem[V]{V} 
N. Varopoulos, 
The Poisson kernel on positively curved manifolds, 
J. Funct. Anal. 44 (1981) 359--380.








\end{thebibliography}
\end{document}